\DeclareMathOperator*{\esssup}{ess\,sup}
\newcommand{\J}{P}
\newcommand{\wJ}{\widetilde P^{(\alpha,\alpha)}}
\newtheorem{theorem}{Theorem}
\newtheorem{corollary}{Corollary}
\newtheorem{remark}{Remark}
\newcounter{reh}
\newcounter{rek}
\begin{document}

\begin{center}
{\large {\bf Multivariate nonparametric regression by least squares  Jacobi polynomials approximations}}\\
\vskip 0.5cm Asma BenSaber$^a$, Sophie Dabo-Niang$^b$ 
and  Abderrazek Karoui$^a$\footnote{
Emails: sophie.dabo@univ-lille.fr (S. Dabo-Niang), abderrazek.karoui@fsb.rnu.tn (A. Karoui), asmabensaber@gmail.com  (A. BenSaber)\\
This work was supported in part by the  
DGRST  research grant LR21ES10 and the PHC-Utique research project 20G1503.} 
\end{center}
\vskip 0.5cm
\noindent
$^a$  University of Carthage, Faculty of Sciences of Bizerte, Departmet of Mathematics, Jarzouna 7021,  Tunisia.\\
$^b$ University of Lille, UMR 9221, Lille F-59000  and  INRIA-MODAL Team, Haute-Borne, Villeneuve d’ascq, France.\\

\noindent{\bf Abstract}--- In  this work, we introduce and study a random  orthogonal projection based least squares estimator for the  stable solution
of a multivariate nonparametric   regression (MNPR) problem. More precisely, given an integer $d\geq 1$ corresponding to the dimension of the MNPR problem,
a positive integer $N\geq 1$ and a real parameter $\alpha\geq -\frac{1}{2},$ we show that  a fairly large class of  $d-$variate regression functions are well and stably approximated by its random projection over the orthonormal set of tensor product $d-$variate Jacobi polynomials with parameters $(\alpha,\alpha).$
 The associated  uni-variate  Jacobi polynomials have degree at most $N$ and their tensor products are  orthonormal over $\mathcal U=[0,1]^d,$ with respect 
 to the associated  multivariate Jacobi weights.
In particular, if we consider  $n$  random sampling points $\mathbf X_i$ following the $d-$variate Beta distribution, with parameters $(\alpha+1,\alpha+1),$
then  we give a relation involving $n, N, \alpha$ to ensure  that the  resulting  $(N+1)^d\times (N+1)^d$
random projection matrix is well conditioned. This is important in the sense that  unlike most least squares based estimators,  no extra regularization scheme
is needed by our proposed estimator.   Moreover, we provide squared integrated as well as  $L^2-$risk errors of this estimator. Precise estimates of these errors are given in the case where  the regression function belongs to an isotropic Sobolev space $H^s(I^d),$ with $s> \frac{d}{2}.$ Also, to handle the general and practical case of
an unknown  distribution of the $\mathbf X_i,$ we use Shepard's scattered interpolation scheme in order to generate fairly precise approximations
of the observed data at $n$ i.i.d. sampling points $\mathbf X_i$ following a $d-$variate Beta distribution. Finally, we illustrate the performance 
of our proposed multivariate nonparametric estimator  by some numerical simulations with synthetic as well as real data. \\

 \noindent {\bf  Keywords:} Multivariate nonparametric regression,  least squares, Jacobi polynomials, orthogonal projection, generalized polynomial chaos, condition number of a random matrix.\\

\section{Introduction}
 For an integer $d\geq 1$ and for sufficiently large positive integer $n,$ we consider the $d-$dimensional multivariate  nonparametric regression (MNR) model given by  
 \begin{equation}
 \label{model1}
 \mathbf Y_i = f(\mathbf X_i) + \varepsilon_i,\quad 1\leq i\leq n.
 \end{equation}
 Here,  the  $\mathbf Y_i$ are the $n$ random responses  and  $f: [0,1]^d \rightarrow \mathbb R$ is the real valued $d-$variate regression function. The ${\mathbf X_i=(X_{i,1},\ldots,X_{i,d})}$ are the $\ n$ random sampling vectors following a given joint probability distribution over $[0,1]^d.$ The $ \varepsilon_i$ are the  $n$ i.i.d. centered random variables with variance $\mathbb E\big[\varepsilon_i^2\big]=\sigma^2.$ 
In the sequel, we adopt the notation
\begin{equation}\label{notation}
\pmb k=(k_1,\ldots,k_d)\in \mathbb N_0^d,\quad \|\pmb k\|_\infty=\max_i |k_i|,\quad [[0,N]]^d=\{0,1,\ldots,N\}^d, \quad  \gamma_{\alpha,d}=\big(\beta(\alpha+1,\alpha+1)\big)^d. 
\end{equation}
Here,  $\alpha \geq -\frac{1}{2}$ is a real number and $\beta(\cdot,\cdot)$ is the usual beta function. For an integer $k\geq 0,$ we let $\wJ_k(x)$ denote the normalized Jacobi polynomial of degree $k$ and parameters $(\alpha,\alpha).$ The $\wJ_k(x), k\geq 0$  and satisfy the orthonormality relation  
\begin{equation}
\label{Eq0.1}
\int_I \wJ_j(x) \wJ_k(x) \, \omega_\alpha(x)\, dx = \delta_{j,k},\qquad I=[0,1],\quad \omega_\alpha(x)= x^\alpha (1-x)^\alpha.
\end{equation}
Under this notation, it is easy to check that the $d-$variate tensor product Jacobi polynomials family  
\begin{equation}\label{Eq1.1}
\Phi_{\pmb m}^\alpha(\pmb x)= \prod_{j=1}^d \wJ_{m_j}(x_j),\quad \pmb x=(x_1,\ldots,x_d),\quad \pmb m=(m_1,\ldots,m_d)\in \{0,1,\ldots\}^d=\mathbb N_0^d,
\end{equation}
forms an orthonormal basis of $L^2(I^d,\pmb \omega_\alpha),$ where ${\displaystyle \pmb \omega_\alpha(\pmb x) =\prod_{j=1}^d  \omega_\alpha(x_j)}.$ 
For a convenient positive integer $N,$ our proposed scheme is based on the approximation of the $d-$variate  regression function $f$ by its approximate projection  over the finite dimensional Hilbert subspace $\mathcal H_N$ of  $L^2(I^d,\pmb \omega_\alpha),$ given by
\begin{equation}\label{Eq1.2}
\mathcal H_N = \mbox{Span}\left\{ \Phi_{\pmb m}^\alpha(\pmb x),\,\, \pmb m\in [[0,N]]^d \right\}.
\end{equation}
We first  assume that  the $n$ i.i.d. random sampling vectors $\mathbf X_i$ follow the $d-$variate Beta distribution with density function
$h_{\alpha+1}(\cdot),$ given by 
\begin{equation}\label{Eq1.4}
h_{\alpha+1}(\pmb x)= \frac{1}{\Big(\beta(\alpha+1,\alpha+1)\Big)^d} \prod_{j=1}^d \omega_{\alpha}(x_j) \, \mathbf 1_{[0,1]^d}(\pmb x),
\end{equation}
where $\beta(\cdot,\cdot)$ is the usual Beta function. Nonetheless, we will see how our proposed multivariate estimator $\widehat  f^\alpha_N$ can be adapted in order to handle the more general and practical case where the $\pmb X_i$ follow an  unknown sampling distribution.
By assuming that 
$ \mathbf Y_i$ is approximated by the function $f^\alpha_N(\mathbf X_i),\, 1\leq i \leq n $, using an approximate of $f$ with the help of \eqref{Eq1.2}, the estimator  $\widehat f^\alpha_N(\cdot)$  of $ f$ is given by 
\begin{equation}\label{Eq1.3}
\widehat f^\alpha_N(\pmb x)=\sum_{\pmb m\in [[0,N]]^d} \widehat{C}_{\pmb m} \Phi_{\pmb m}^\alpha(\pmb x),\quad \pmb x\in [0,1]^d.
\end{equation}
Here,  the expansion coefficients vector
${\displaystyle \widehat{\pmb C}= \big(\widehat{C}_{\pmb m}\big)^T_{\pmb m \in [[0,N]]^d}}$  is computed in a stable way 
by the following formula,
\begin{equation}\label{solution1}
\widehat{\pmb C} = \big(G^\alpha_{d,N}\big)^{-1} \cdot \Big({\big(F^\alpha_{d,N}}\big)^T \cdot  \frac{\big(\beta(\alpha+1,\alpha+1)\big)^{d/2}}{n^{1/2}} \Big[ Y_{i} \Big]_{ i=1,...,n}^T.
\end{equation} 
The $n\times (N+1)^d$ random matrix  $F^\alpha_{d,N}$ and the $ (N+1)^d\times (N+1)^d$    positive definite random matrix $G^\alpha_{d,N}$, are given by 
\begin{equation}\label{matrixG}
G^\alpha_{d,N}= \big(F^\alpha_{d,N}\big)^T F^\alpha_{d,N},\qquad F^\alpha_{d,N}= \frac{\big(\beta(\alpha+1,\alpha+1)\big)^{d/2}}{  n^{1/2}} \Big[ \Phi_{\pmb m}^\alpha(\mathbf X_i)\Big]_{\substack{ \scriptstyle 1\leq i\leq n\\
 \scriptstyle  \pmb m\in [[0,N]]^d}}. 
\end{equation}
We should mention that in practice $N\ll n.$ Moreover, the scheme given by \eqref{Eq1.3}--\eqref{matrixG} is nothing but the least squares of the over-determined system ${\displaystyle \widehat f^\alpha_N(\pmb X_i)= \pmb Y_i,\,\, 1\leq i\leq n.}$
In other words, the estimator $\widehat f^\alpha_N$ is a solution of the minimization problem
\begin{equation}\label{minimization}
\widehat f^\alpha_N = \arg\min_{f \in \mathcal H_N} \sum_{i=1}^n \big(f(X_i)-Y_i\big)^2.
\end{equation}

Note that our proposed estimator $\widehat f^\alpha_N$ belongs to a larger class of  multivariate orthogonal polynomials based least-squares estimators for multivariate nonparametric regression problems, see for example \cite{Cohen, Guo, Shin1, Zhou2}. Moreover, $\widehat f^\alpha_N$  is closely related to the  generalized Polynomial Chaos (gPC) or the Polynomial Chaos Expansions (PCE) class of nonparametric regression estimators in higher dimensions, see for example \cite{Blatman, Guo, Jakeman, Shin1, Torre, Xiu, Zhou2}. The gPC or the PCE techniques aim to approximate a $d-$variate function $f$ via $d-$orthogonal polynomials, where the orthogonality is defined by a probability measure on the input space $\mathcal X\subset \mathbb R^d.$ This technique is widely used in the area of parametric uncertainty quantification, where one is faced with the  challenge to approximate functions in high dimension $d$ and via its point evaluations. In the literature, there exist various techniques for gPC based multivariate regression. Among these techniques, we cite least-squares, weighted discrete  least-squares, sparse grids approximations, $l_1-$minimization sparse approximation. For more details, the reader is refereed to \cite{Zhou2} and the references therein. In general, three types of finite dimensional  multivariate polynomials spaces are used by gPC type regression schemes. More precisely, for a positive integer $N,$ these spaces are:\\
$-$ $\mathcal P_N^{TP}=\mbox{Span}\{\pmb x^{\pmb i}=x_1^{i_1}\cdots x_d^{i_d},\quad \|\pmb i\|_\infty =\max_j |i_j|\leq N \}:$ the tensor product space of degree $N.$\\
$-$ $\mathcal P_N^{TD}=\mbox{Span}\{\pmb x^{\pmb i}=x_1^{i_1}\cdots x_d^{i_d},\quad \|\pmb i\|_1=\sum_{j=1}^d |i_j| \leq N \}:$ the total degree  space of degree $N.$\\
$-$ For $0<q<1,$  $\mathcal P_{q,N}^{HC}=\mbox{Span}\{\pmb x^{\pmb i}=x_1^{i_1}\cdots x_d^{i_d},\quad \|\pmb i\|_q=\Big(\sum_{j=1}^d |i_j|^q \Big)^{1/q}\leq N \}:$ the hyperbolic cross space of  degree $N.$\\
Note that the dimensions of the first two spaces are given by $\dim(\mathcal P_N^{TP})=(N+1)^d,\,\,  \dim(\mathcal P_N^{TD})={N+d \choose d}.$ For the space 
 $\mathcal P_{q,N}^{HC},$ there is no precise estimate of its dimension, nonetheless numerical evidences indicate that for the values of $0<q \leq 0.5,$ 
and even for moderate large values of the dimension $d,$ $\dim(\mathcal P_{q,N}^{HC}) $ is drastically much smaller than the dimensions of the previous two spaces. For more details, the reader is refereed to \cite{Blatman}. In the present work, we restrict ourselves to the case of the tensor product polynomial space, generated by the product of univariate Jacobis polynomials. Nonetheless, most of our results can be extended to the other two multivariate polynomial spaces $\mathcal P_N^{TD}$ and $\mathcal P_{q,N}^{HC}.$ Note that  due to the blow-up of its cardinality with respect to the dimension $d,$  the tensor product space $\mathcal P_N^{TP}$ is practical only for small values of the dimension $d.$
For moderate large values of $d,$ one has to apply dimension reductions techniques, such as  sparsity and / or more optimal sampling techniques, see for example \cite{Blatman, Guo,  Jakeman, Shin1, Zhou2}. Also, a popular class of nonparametric regression  estimators adapted for moderate values of the dimension $d$ are based on the combination of a model selection and smoothing (regularization) tecnhiques, see for example \cite{Lin} and the references therein. \\
Perhaps, the stability problem is one of the most important issues related to the multivariate least-squares regression schemes. In  the literature, only very few references have dealt with this issue so far, see for example \cite{Cohen, Zhou2}. 
In particular, in \cite{Zhou2} the authors have studied the stability of weighted least-squares with random sub-sampling of tensor Gauss points. Under our notation,
they have shown that for any $\mu \geq 2,$ the previous scheme is stable with probability at least ${\displaystyle 1-\frac{2}{n^{\mu-1}},}$ provided that 
\begin{equation}
\label{stabilitycond1}
\frac{n}{\log n} \gtrsim \frac{4\mu}{c} C_w C_b \dim (\mathcal P_{\Lambda_N}).
\end{equation}
 Here, $\mathcal P_{\Lambda_N}$ is one of the three polynomial spaces, $\Lambda_N$ is the associated set of indices, $c$ is a uniform positive constant, $C_w$ is a constant depending on the Gauss weights and ${\displaystyle C_b= \max_{\pmb i\in \Lambda_N} \pmb \psi_{\pmb i}(\pmb z)},$ where the $\pmb \psi_{\pmb i}$ are the different multivariate polynomials and the $\pmb z$ are the different tensor product Gauss nodes. The quantities $C_b$ and $\dim (\mathcal P_{\Lambda_N})$ have the largest contributions to the previous lower bound of the stability condition. Precise estimates of these two  quantities and consequently of the stability condition have been given in \cite{Zhou2} for the special cases of the tensor product Legendre, as well as the Chebyshev polynomials.\\
One of the main results of this work is to prove that under a condition relating the parameters $N, d$ and $\alpha\geq -\frac{1}{2},$ our  least-squares polynomials regression,  with random sampling following a multivariate beta distribution is stable.  More precisely, 
if ${\displaystyle \kappa_2(G^\alpha_{d,N})=\frac{\lambda_{\max}(G^\alpha_{d,N})}{\lambda_{\min}(G^\alpha_{d,N})}}$
is the $2-$norm condition number of the random matrix $G^\alpha_{d,N},$ then for $  \alpha \geq -\frac{1}{2}$ and for any $0< \delta <1,$
we have 
\begin{equation}\label{kappa2G}
\mathbb P \left( \kappa_2 (G^\alpha_{d,N}) \leq \frac{1+\delta}{1-\delta} \right) \geq 1 - 2(N+1)^d \exp\left(-\frac{n\, \delta^2}{3 \big(B_\alpha (N+1)^{2\alpha +2}\big)^d}\right).
\end{equation}
Here, $B_{\alpha} \leq 1$ is a constant depending only on $\alpha.$ From the previous estimate, it can be easily checked that our estimator is stable with high probability whenever  $n,$ the total  number of sampling points  satisfies
\begin{equation}
\label{Eq1.5}
 n\geq \frac{3}{\delta^2}  \log\big(2(N+1)^d\big)\Big( B_\alpha (N+1)^{2\alpha+2} \Big)^{d}.
\end{equation}
Note that for the case of the multivariate  tensor product space based on Jacobi  polynomials, the previous stability is a refinement of the more general purpose 
stability condition \eqref{stabilitycond1}. Moreover, from the inequality \eqref{Eq1.5}, the special value of ${\displaystyle \alpha=-\frac{1}{2}}$ is convenient in the sense that it ensures 
the stability of the estimator $\widehat f^\alpha_N$ with the smaller  values of $n.$

 A second main result of this work is the following weighted $L^2(I^d)-$error of the estimator $\widehat  f^\alpha_N.$ For this purpose, we use the fairly usual assumption 
 on the i.i.d. random noises $(\varepsilon_i)_i,$  that for a given probability value
 $0< p_\epsilon \ll 1,$ there exists a moderate positive constant $M_\epsilon $ so that 
 \begin{equation}
 \label{Eqq0.2}
 \mathbb P \Big( |\varepsilon_i| > M_\epsilon \Big) = p_\epsilon,\quad 0< p_\epsilon \ll 1.
 \end{equation}
Let  $\pi_N f$ be the orthogonal projection of $f$ over $\mathcal H_N$  and let
$\|\cdot \|_\alpha$ be the usual $2-$norm of $L^2(I^d,\pmb \omega_{\alpha}).$  Then, for any ${\displaystyle 0< \delta < \frac{1}{\|\pi_N(f)\|_\alpha}}$, we have with high probability depending on $\delta,$
\begin{equation}
\label{Eq0.2}
 \|f-\widehat f^\alpha_N\|_\alpha\leq \|f-\pi_N(f)\|_\alpha+\sqrt{\kappa_2(G^\alpha_{d,N})}
 \Big(\|f-\pi_N(f)\|^2_\infty+\sigma^2+\delta\Big)^{1/2}\frac{1}{\sqrt{1-\frac{\delta}{\|\pi_N(f)\|_\alpha}}},
\end{equation}
Precise  estimates for the errors
$\|f-\pi_N(f)\|_\alpha$ and $\|f-\pi_N(f)\|_\infty$ will be given for those 
regression function  $f$ belonging to an isotropic Sobolev space $H^s(I^d),$ with 
$s> d(\alpha+\frac{3}{2}).$  More importantly, for $0<\delta<1,$  we give the following $L^2-$risk error of $\widehat F_N$ (see equation (\ref{1'})), a truncated version of the  estimator  $\widehat f^\alpha_N,$
\begin{eqnarray}
\label{Eq0.3}
\mathbb E\Big[\| f-\widehat F_N \|_\alpha^2\Big]&\leq&\frac{(N+1)^d}{n(1-\delta)^2}\Bigg(\sigma^2+\big(\eta_\alpha^2(N+1)^{2\alpha+1}\big)^d\| f-\pi_Nf \|_\alpha^2\Bigg)+\| f-\pi_Nf \|_\alpha^2\nonumber\\
&&\qquad +
 4M_f^2(N+1)^d\big(\beta(\alpha+1,\alpha+1)\big)^{d} \exp\Big(-\frac{n \delta^2}{2 \big(B_\alpha (N+1)^{2\alpha +2}\big)^d}\Big),
\end{eqnarray}
where $\eta_\alpha, B_\alpha$ are constants depending only on $\alpha$ and $|f(\pmb x)|\leq M_f,\, a.e.\,  \pmb x\in I^d.$\\

It is interesting to note that the curse of dimension does not 
only affect the computational load that grows drastically with the dimension $d,$ but also has a negative effect on the convergence rate for the multivariate regression estimators. In \cite{Gyorfi}, the authors have given a detailed study of this issue. In particular, for the case of a multivariate nonparametric problem with the $n$ random sampling vectors $\pmb X_i$ belonging to a compact subset $\pmb \chi$  of 
$\mathbb R^d,$ and for $L>0$ and a positive integer $\alpha\geq 1,$ they have considered 
the H\"older class of $d-$variate functions, defined by 
$$ H_d(\alpha,L)=\Big\{ g: |D^{\pmb s} g(\pmb x)-D^{\pmb s} g(\pmb y)|\leq L \|\pmb x-\pmb y\|,\quad \pmb x,\pmb y\in \pmb \chi,\quad \|\pmb s\|_1\leq \alpha-1 \Big\}.$$
Here, $D^{\pmb s}g$ denotes the partial derivatives of $g$ of order $\|\pmb s\|_1$ and associated with $\pmb  s \in \mathbb N_0^d.$
Then, it has been shown in \cite{Gyorfi}, see also \cite{Ryan}, that for most of the usual nonparametric regression estimator $\widehat g,$ the min-max convergence rate over the functional space $H_d(\alpha,L)$ is given by 
$$\inf_{\widehat  g} \sup_{g_0\in H_d(\alpha,L)} \mathbb E\Big(\|\widehat g-g_0\|^2\Big) \gtrsim  n^{-2\alpha/(2\alpha+d)}.$$
That is the optimal convergence rate $O\Big(n^{-2\alpha/(2\alpha+d)}\Big)$   decays in a significant manner as the dimension $d$ grows.\\
In this work, we prove that our proposed estimator  has a similar optimal convergence rate under the condition that the regression function belongs to an isotropic Sobolev space $H^s(I^d),$ for some $s>0.$ More precisely, we prove that in this case, the $L_2-$risk of the proposed estimator is of order  $O\Big(n^{-2s/(2s+d)}\Big).$\\
  
This work is organized as follows. In section 2, we give some mathematical preliminaries that will be frequently used to prove the different results of this work. In section 3, we first prove the  stability property  of our proposed nonparametric regression estimator. Then,  we study the convergence rate of our estimator. Section 4 is devoted to various numerical simulations ( on  synthetic data as well as real data) that illustrate the different  results of this work.
Finally, in section 5, we give some concluding remarks concerning this work.

\section{Mathematical preliminaries}\label{maths.prelim} 

In this paragraph, we provide the reader with some mathematical preliminaries that are frequently used to describe and prove the different results of this work. we first give the following fairly known definitions and properties related to the uni-variate Jacobi polynomials.\\

It is well known, see for example \cite{Andrews} that for any two real parameters  $\alpha, \beta >-1,$ the classical Jacobi polynomials $\J_k$ are defined for $x\in [-1,1],$ by the following Rodrigues formula,
\begin{equation}
\label{Jacobi_Rodrigues}
\J_k^{(\alpha,\beta)} (x)= \frac{(-1)^k}{2^k k!} \frac{1}{\omega_{\alpha,\beta}(x)} \frac{\mbox{d}^k}{\mbox{d} x^k} \Big( \omega_{\alpha,\beta}(x)(1-x^2)^k\Big),\quad
\omega_{\alpha,\beta}(x)= (1-x)^\alpha (1+x)^\beta,
\end{equation}
with
$$\J_k^{(\alpha,\beta)}(1) = { k+\max(\alpha,\beta) \choose{k}}  =\frac{\Gamma(k+\max(\alpha,\beta)+1)}{k!\, \Gamma(\max(\alpha,\beta)+1)}.$$
Here, $\Gamma(\cdot)$ denotes the usual Gamma function. 
More importantly and as others families of classical orthogonal polynomials,
the Jacobi polynomials $\J_k$ are given by the following practical three
term recursion formula
\begin{equation}\label{recursion1}
\J_{k+1}^{(\alpha,\beta)}(x)= (a_k x + b_k)\J_{k}^{(\alpha,\beta)}(x) -c_k \J_{k-1}^{(\alpha,\beta)}(x),\quad x\in [-1,1].
\end{equation}
with  $\J_0^{(\alpha,\beta)}(x)=1,\quad \J_1^{(\alpha,\beta)}(x)=\frac{1}{2}(\alpha+\beta+2)x +\frac{1}{2}(\alpha+\beta).$  Here, 
\begin{eqnarray}\label{recursion2}
a_k &=&\frac{(2k+\alpha+\beta+1)(2k+\alpha+\beta+2)}{2 (k+1)(k+\alpha+\beta+1)},\quad b_k=\frac{(\alpha^2-\beta^2)(2k+\alpha+\beta+1)}{2(k+1)(k+\alpha+\beta+1)(2k+\alpha+\beta)}\nonumber\\
c_k&=&\frac{(k+\alpha)(k+\beta)(2k+\alpha+\beta+2)}{(k+1)(k+\alpha+\beta+1)(2k+\alpha+\beta)}.
\end{eqnarray}
These Jacobi polynomials satisfy the following orthogonal relation 
$$\int_{-1}^1 \J_{k}^{(\alpha,\beta)}(x) \J_{m}^{(\alpha,\beta)}(x) \omega_{\alpha,\beta}(y)\, dy =h_k^{\alpha,\beta} \delta_{k,m},\quad 
h_k^{\alpha,\beta}=\frac{2^{\alpha+\beta+1}\Gamma(k+\alpha+1)\Gamma(k+\beta+1)}{k!(2k+\alpha+\beta+1)\Gamma(k+\alpha+\beta+1)},$$
where $\delta_{k,m}$ is the usual Kronecker delta function. Note that the set $\{ \frac{1}{\sqrt{h_k^{\alpha,\beta}}}\J_k^{(\alpha,\beta)}(x),\,k\geq 0\}$ is an orthonormal basis of the weighted $L^2([-1,1],\omega_{\alpha,\beta}),$ which is a Hilbert space associated with the inner product $<\cdot,\cdot>_\omega,$ defined by ${\displaystyle 
<f,g>_\omega=\int_{-1}^1 f(t) g(t) \omega_{\alpha,\beta}(t)\, dt}.$

In the sequel, we will only consider the case of $\beta=\alpha\geq -\frac{1}{2}.$ We let $\wJ_k$ denote the normalized Jacobi polynomial over $I=[0,1]$ of degree $k$ and given by 
\begin{equation}\label{JacobiP}
\wJ_{k}(x)= \frac{2^{\alpha+1/2}}{\sqrt{h_k^{\alpha,\beta}}}\J_k^{(\alpha,\alpha)}(2x-1),\quad x\in I.
\end{equation}
In this case, we have
\begin{equation}\label{Normalisation1}
\| \wJ_k \|^2_\omega=\int_{0}^1 (\wJ_k(y))^2 \omega_{\alpha}(y)\, dy =1,\quad \omega_{\alpha}(y)=y^\alpha (1-y)^\alpha.
\end{equation}
The following   useful upper bounds for the normalized Jacobi polynomials $\wJ_k,$ for $k\geq 2$
is borrowed from \cite{BenSaber-Karoui}

\begin{equation}\label{Ineq2.1}
\max_{x\in [-1,1]} |\wJ_k (x)| \leq \eta_{\alpha}  k^{\alpha} \sqrt{k+\alpha+\frac{1}{2}},\quad  \forall\, k\geq 2,
\end{equation}
where
\begin{equation}\label{IneQ2.1}
 \eta_{\alpha}= \frac{\sqrt{2}}{\Gamma(\alpha+1)} \exp\left(\frac{\max(0,\alpha)}{6}+\frac{\alpha^2}{4}\right).
\end{equation}

Next, we briefly describe the original as well as modified Shepard's algorithms for the interpolation of multivariate scattered data. This type of interpolation is needed in order to get convenient interpolations of the observations at some appropriate random sampling points. The original Shepard's interpolation of multivariate scattered data can be described as follows, see for example  \cite{Shepard}. Let 
$\mu>0$ be a  positive  real number  and let  $\pmb X=\{\pmb x_1,\ldots,\pmb x_n\}$
be a  set of $n$ distinct points of a domain $\mathcal D\subset \mathbb R^d,\, d\geq 1,$ then  for the $n$ associated  real valued function evaluations $f_i=f(\pmb x_i)$, the Shepard's interpolation operator is given by 
\begin{equation}\label{Ineq2.2}
S_\mu(f)(x) =\sum_{i=1}^n A_{\mu,i}(\pmb x) f_i,\qquad A_{\mu,i}(\pmb x) =\frac{\Big( \mbox{d}(\pmb x,\pmb x_i)\Big)^{-\mu}}{\sum_{i=1}^n \Big( \mbox{d}(\pmb x,\pmb x_i)\Big)^{-\mu} },
\end{equation}
where $\mbox{d}(\cdot,\cdot)$ is a distance on $\mathbb R^d.$
The previous interpolation has the drawback that the basis functions $A_{\mu,i}$ have significant values at those points $\pmb x_i$ which are far from the considered interpolation point $\pmb x.$ To overcome this drawback, see for example \cite{Shepard},  the $A_{\mu,i}$ are substituted by compactly supported basis functions $W_{\mu,i},$
so that for a given radius of influence $R>0,$ the modified Shepard's algorithm is given by 
\begin{equation}\label{Ineq2.3}
S_\mu^R(f)(x) = \sum_{i=1}^n W_{\mu,i}(\pmb x) f_i,\qquad W_{\mu,i}(\pmb x) =\frac{\Big( \frac{1}{\mbox{d}(\pmb x,\pmb x_i)}-\frac{1}{R}\Big)_+^{\mu}}{\sum_{i=1}^n \Big( \frac{1}{\mbox{d}(\pmb x,\pmb x_i)}-\frac{1}{R}\Big)_+^{\mu}},
\end{equation}
where, $(t)_+=\max(t,0).$ Also,  the original Shepard's algorithm has been further developed by considering  a
combined Shepard-Multivariate Taylor interpolation polynomial $S_{\mu,r}$, where for an integer $r\geq 0,$ 
\begin{equation}\label{Ineq2.4}
S_{\mu,r}(f)(x)=\sum_{i=1}^n A_{\mu,i}(\pmb x) T_{r,\pmb x_i}(f)(\pmb x),\qquad  T_{r,\pmb x_i}(f)(\pmb x)=\sum_{\nu=0}^r \frac{D^\nu f(\pmb x_i)}{\nu !} (\pmb x -\pmb x_i)^\nu.
\end{equation}
It has been showed that 
\begin{equation}\label{Ineq2.5}
\|S_{\mu,r}(f)-f\| = \left\{\begin{array}{ll} O(h^{r+1}) &\mbox{ if } \mu-d >r+1\\ O(h^{\mu-d} |\log h|) &\mbox{ if } \mu-d=r+1.\end{array} \right.
\end{equation} 
Here, $h$ is the mesh step size that is the largest distance between the neighboring points $\pmb x_i.$ For more details, the reader is refereed to \cite{Shepard} or to \cite{Lodha}.
In particular, the last reference is a comprehensive review of several  other  types of scattered  multivariate interpolation techniques.\\

Next, we give the following highly  useful matrix Chernoff theorem, see for example [\cite{Tropp},\, p.10], that provides us with  upper and lower bounds for the smallest and largest eigenvalues of a sum of random Hermitian matrices. \\

\noindent
{\bf Matrix Chernoff Theorem:}  Consider a finite sequence of $n$ independent $D\times D$ random Hermitian matrices $\{ \pmb Z_k\}.$ Assume that for some $B>0,$ we have 
$$0 \preccurlyeq \pmb Z_k \preccurlyeq B I_D.  $$ Let 
$$\mathbf A= \sum_{k=1}^n \pmb Z_k,\quad \mu_{\min}= \lambda_{\min} \big(\mathbb E(\mathbf A) \big),\quad  \mu_{\max}= \lambda_{\max} \big(\mathbb E (\mathbf A) \big).$$
Then,  for any $\delta \in (0,1],$ we have
\begin{equation}\label{Chernoff}
\mathbb P \Big(\lambda_{\min} \big(\mathbf A\big)\leq (1-\delta)\mu_{\min}\Big)\leq D \left[\frac{e^{-\delta}}{(1-\delta)^{1-\delta}}\right]^{\mu_{\min}/B},\quad \mathbb P \Big(\lambda_{\max} \big(\mathbf A\big)\geq (1+\delta)\mu_{\max}\Big)\leq D \left[\frac{e^{\delta}}{(1+\delta)^{1+\delta}}\right]^{\mu_{\max}/B}.
\end{equation}
As a consequence of the previous two inequalities, one can check, see for example [\cite{Tropp},\, p.12], that for $\delta\in (0,1],$ we have
\begin{equation}\label{Eqq3.7}
\mathbb P\Big(\lambda_{\min}(\mathbf A)\leq (1-\delta) \mu_{\min} \Big) \leq D \exp\Big(-\frac{\delta^2 \mu_{\min}}{2 B}\Big),\quad 
\mathbb P\Big(\lambda_{\max} (\mathbf A)\geq (1+\delta)\mu_{\max} \Big) \leq D \exp\Big(-\frac{\delta^2 \mu_{\max}}{3 B}\Big).
\end{equation}

Finally, the following Gershgorin circle theorem, see for example \cite{Johnson},  will be needed to prove one of the main theoretical results of this work which is an estimate for an upper bound of the random projection matrix.\\

\noindent
{\bf Gershgorin circle Theorem:}  Let $A= [a_{ij}]$ be a complex $n\times n$  matrix. For $1\leq i\leq n,$ let 
${\displaystyle R_i = \sum_{j\neq i} |a_{ij}|.}$ Then, every eigenvalue of $A$ lies within at least one of the discs $D(a_{ii}, R_i).$\\

\section{Stability and convergence rates of the estimator}

In this paragraph, we first describe our orthogonal projection based scheme for solving the MNPR problem \eqref{model1}. Then, we prove the inequality
\eqref{kappa2G}. That is with high probability,  the random projection matrix $G^\alpha_{d,N},$ given by \eqref{matrixG} is well conditioned. Our random orthogonal projection based scheme is described as follows. We first substitute in \eqref{Eq1.3}, $\pmb x$ by $\mathbf X_i,\, i=1,...,n,$ where the $\mathbf X_i$ are i.i.d random samples following the $d-$dimensional Beta probability distribution $h_{\alpha+1}(\cdot),$ given by \eqref{Eq1.4}. Note that the beta distribution
is widely used in the framework of various models from mathematical statistics, see for example \cite{Guolo, Jakeman, Zhou2} . Then, after rescaling by the factor $\big( n \beta(\alpha+1,\alpha+1)\big)^{-d/2},$ one obtains the following overdetermined system of $n$ equations in the 
$(N+1)^d$ unknown expansion coefficients vectors $\widehat C_{\pmb m},$
\begin{equation}\label{system1}
\frac{\big(\beta(\alpha+1,\alpha+1)\big)^{d/2}}{n^{1/2} }Y_i= F^\alpha_{d,N} \cdot \widehat{\pmb C},\qquad \widehat{\pmb C}= \big(\widehat{C}_{\pmb m}\big)^T_{\pmb m \in [[0,N]]^d},
\end{equation}
where the random matrix $F^\alpha_{d,N}$ is given by \eqref{matrixG}. Note that since the basis functions $\Phi^\alpha_{\pmb m}(\cdot),\, \pmb m\in [[0,N]]^d$ form an orthonormal basis of the finite dimensional subspace $\mathcal H_N$ of $E=L^2(I^d,\pmb \omega_\alpha(\pmb x) d\pmb x),$ then for 
$f(\cdot)\in E,$ its orthogonal projection over $\mathcal H_N$ denoted by $\pi_N f$ is uniquely defined. The expansion coefficients of $\pi_N f$ are given by 
\begin{equation}\label{expansion_coeffs}
C_{\pmb m}= < f, \Phi^\alpha_{\pmb m}> =\int_{I^d} f(\pmb x)\Phi^\alpha_{\pmb m}(\pmb x) \pmb \omega_\alpha(\pmb x) d\pmb x,\qquad \pmb m\in [[0,N]]^d.
\end{equation}
That is under the hypothesis of a noise free regression model with a piecewise continuous regression function, it can be easily checked that for sufficiently large value of the total sampling points  $n,$ the least square norm solution of \eqref{system1} doest not depend on the considered sampling 
set $\{\mathbf X_i,\, i\in [[1,n_1^d]]=[[1,n]]\}.$ Multiplying the system \eqref{system1} from both sides by $\big(F^\alpha_{d,N}\big)^T.$ As we show in the sequel, with high probability, the $(N+1)^d-$dimensional random matrix $G^\alpha_{d,N}= \big(F^\alpha_{d,N}\big)^T\cdot F^\alpha_{d,N}$ is positive definite 
and hence invertible. Consequently, by applying $\big(G^\alpha_{d,N}\big)^{-1}$ to the previous intermediate system \eqref{system1}, one gets the reduced Cramer system
\eqref{solution1}.\\

The following theorem is one of the main results of this work. It gives us with high probability, a relatively small  upper bound for $\kappa_2(G^\alpha_{d,N}),$ the  $2-$norm condition number  of the positive definite random projection matrix, associated to the Jacobi system. 
This theorem allows us to use the inverse of $G^\alpha_{d,N}$ in computing the estimator $\widehat f^\alpha_N(\cdot),$ given by \eqref{Eq1.3} and \eqref{solution1}.

\begin{theorem}\label{Mainthm1}
Under the previous notation and assumption, for any $\alpha\geq -\frac{1}{2}$ and any $0< \delta <1,$
we have  
\begin{equation}\label{kkappa2G}
\mathbb P \left( \kappa_2 (G^\alpha_{d,N}) \leq \frac{1+\delta}{1-\delta} \right) \geq 1 - 2(N+1)^d \exp\left(-\delta^2\frac{n}{3 \big(B_\alpha (N+1)^{2\alpha +2}\big)^d}\right).
\end{equation}
Here, $B_{\alpha} \leq 1$ is a constant depending only on $\alpha.$
\end{theorem}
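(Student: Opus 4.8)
The plan is to realize $G^\alpha_{d,N}$ as a sum of $n$ independent rank-one positive semidefinite random matrices and then invoke the Matrix Chernoff Theorem stated above. First I would fix $D=(N+1)^d$ and, for each sample $\mathbf X_i$, introduce the random column vector $\pmb v_i=\big(\Phi_{\pmb m}^\alpha(\mathbf X_i)\big)_{\pmb m\in[[0,N]]^d}\in\mathbb R^D$. Writing $\gamma=\beta(\alpha+1,\alpha+1)$ and reading off the definition \eqref{matrixG}, one checks entrywise that $G^\alpha_{d,N}=\sum_{i=1}^n \pmb Z_i$ with $\pmb Z_i=\frac{\gamma^d}{n}\,\pmb v_i\pmb v_i^T$. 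Each $\pmb Z_i$ is Hermitian with $\pmb Z_i\succcurlyeq 0$, and the $\pmb Z_i$ are i.i.d. since the $\mathbf X_i$ are.

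The key computation is $\mathbb E[\mathbf A]$ where $\mathbf A=G^\alpha_{d,N}$. Because $\mathbf X_i$ has density $h_{\alpha+1}(\pmb x)=\gamma^{-d}\pmb\omega_\alpha(\pmb x)$ on $I^d$, the $(\pmb m,\pmb l)$ entry of $\mathbb E[\pmb v_i\pmb v_i^T]$ equals $\int_{I^d}\Phi_{\pmb m}^\alpha\Phi_{\pmb l}^\alpha\,h_{\alpha+1}\,d\pmb x=\gamma^{-d}\langle\Phi_{\pmb m}^\alpha,\Phi_{\pmb l}^\alpha\rangle=\gamma^{-d}\delta_{\pmb m,\pmb l}$, by the orthonormality of the tensor Jacobi basis in $L^2(I^d,\pmb\omega_\alpha)$. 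Hence $\mathbb E[\pmb Z_i]=\frac1n I_D$ and $\mathbb E[\mathbf A]=I_D$, so $\mu_{\min}=\mu_{\max}=1$. This is precisely what forces the condition number to cluster around $1$.

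Next I would produce the uniform operator-norm bound $\pmb Z_i\preccurlyeq B\,I_D$. Since $\pmb Z_i$ is rank-one, $\|\pmb Z_i\|_2=\frac{\gamma^d}{n}\|\pmb v_i\|_2^2$, and the tensor-product structure $\Phi_{\pmb m}^\alpha(\pmb x)=\prod_j\wJ_{m_j}(x_j)$ factorizes the squared norm as $\|\pmb v_i\|_2^2=\prod_{j=1}^d\big(\sum_{m=0}^N|\wJ_m(X_{i,j})|^2\big)$. Bounding each one-dimensional sum by its supremum over $[0,1]$ and inserting the pointwise estimate \eqref{Ineq2.1}, one gets $\sum_{m=0}^N|\wJ_m(x)|^2\lesssim(N+1)^{2\alpha+2}$ uniformly in $x$; carrying the constants through the sum $\sum_m m^{2\alpha}(m+\alpha+\frac12)$ and absorbing the factor $\gamma$ yields $\|\pmb Z_i\|_2\le B$ with $B=\frac1n\big(B_\alpha(N+1)^{2\alpha+2}\big)^d$ for a constant $B_\alpha$ depending only on $\alpha$. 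I expect this constant-tracking step, and in particular pinning down $B_\alpha$, to be the main technical obstacle, since it is where the delicate sup-norm behavior of the normalized Jacobi polynomials and the normalizing weight $\gamma$ must be balanced against the target power $(N+1)^{2\alpha+2}$.

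Finally I would apply the two Chernoff tail bounds \eqref{Eqq3.7} with $\mu_{\min}=\mu_{\max}=1$ and this $B$, giving $\mathbb P\big(\lambda_{\min}(\mathbf A)\le 1-\delta\big)\le D\,e^{-\delta^2/(2B)}$ and $\mathbb P\big(\lambda_{\max}(\mathbf A)\ge 1+\delta\big)\le D\,e^{-\delta^2/(3B)}$. On the complement of both events one has $\lambda_{\min}>1-\delta$ and $\lambda_{\max}<1+\delta$, hence $\kappa_2(G^\alpha_{d,N})=\lambda_{\max}/\lambda_{\min}<\frac{1+\delta}{1-\delta}$. A union bound, together with $e^{-\delta^2/(2B)}\le e^{-\delta^2/(3B)}$, the value $D=(N+1)^d$, and the expression for $B$, bounds the failure probability by $2(N+1)^d\exp\!\big(-\delta^2\frac{n}{3(B_\alpha(N+1)^{2\alpha+2})^d}\big)$, which is exactly \eqref{kkappa2G}.
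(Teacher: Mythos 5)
Your proof is correct and follows essentially the same route as the paper: the same decomposition of $G^\alpha_{d,N}$ into $n$ i.i.d.\ rank-one positive semidefinite summands, the same computation $\mathbb E\big[G^\alpha_{d,N}\big]=I_{(N+1)^d}$ giving $\mu_{\min}=\mu_{\max}=1$, a uniform bound $B=\frac{1}{n}\big(B_\alpha(N+1)^{2\alpha+2}\big)^d$ of the same form obtained from the sup-norm estimate \eqref{Ineq2.1}, and the two matrix Chernoff tails merged by a union bound into the weaker constant $3$. The only cosmetic difference is that you bound $\lambda_{\max}$ of each summand by the exact rank-one identity $\|\pmb v_i\pmb v_i^T\|_2=\|\pmb v_i\|_2^2$, whereas the paper invokes the Gershgorin circle theorem; both yield the same order for $B$.
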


\noindent{\bf Proof:} To alleviate notation, we consider the correspondence 
$g: [[0,N]]^d \rightarrow [[1, (N+1)^d]],$ given by 
$$g(m_1,\ldots,m_d)= 1+ \sum_{k=1}^d m_k (N+1)^{k-1},\qquad \pmb m=(m_1,\ldots,m_d)\in [[0,N]]^d.$$
Also, we shall use the notation
$$\Psi^\alpha_m(\pmb x)=  \Phi^\alpha_{g^{-1}(m)}(\pmb x),\qquad m\in [[1,(N+1)^d]].$$
So that we have 
\begin{equation}
\label{matrixG2}
G^\alpha_{d,N} = \left[ \frac{\big(\beta(\alpha+1,\alpha+1)\big)^d}{n } \sum_{i=1}^n \Psi^\alpha_{m_1}(\mathbf X_{i})\Psi^\alpha_{m_2}(\mathbf X_{i})\right]_{\substack{ \scriptstyle 1\leq m_1\leq (N+1)^d\\
 \scriptstyle 1\leq m_2\leq (N+1)^d}}.
\end{equation}
It is easy to see that for $i=1,\ldots,n$ and $1\leq j\leq d,$ we have 
$$ \mathbb E\Big( \wJ_k(X_{i,j}) \wJ_l(X_{i,j}) \Big)=\int_{I} \wJ_k(x) \wJ_l(x) \frac{\omega_\alpha(x)}{\beta(\alpha+1,\alpha+1)}\, dx = \frac{1}{\beta(\alpha+1,\alpha+1)} \delta_{k,l}.$$
Consequently, we have 
$$ \mathbb E\Big(G^\alpha_{d,N}\Big)= I_{(N+1)^d},$$
the $(N+1)^d-$dimensional identity matrix. On the other hand, we write $G^\alpha_{d,N}$ as follows 
$$G^\alpha_{d,N} = \sum_{i=1}^n H_i^\alpha,\qquad H_i^\alpha= \left[ \frac{\big(\beta(\alpha+1,\alpha+1)\big)^d}{n }  \Psi^\alpha_{m_1}(\mathbf X_{i})\Psi^\alpha_{m_2}(\mathbf X_{i})\right]_{\substack{ \scriptstyle 1\leq m_1\leq (N+1)^d\\
 \scriptstyle 1\leq m_2\leq (N+1)^d}}.$$
We check that there exists a constant $B$ such that 
\begin{equation}
\label{Eq3.2}
0 \preccurlyeq H_i^\alpha \preccurlyeq B  I_{(N+1)^d},\qquad i=1,\ldots,n,
\end{equation}
that is $H_i^\alpha$ and $ B I_{(N+1)^d}-H_i^\alpha$ are positive semi-definite.  Since $H_i^\alpha = A_i^T A_i,$ where $A_i$ is the $(N+1)^d\times 1$ matrix given by ${\displaystyle A_i= \frac{(\beta(\alpha+1,\alpha+1)^{d/2})}{n^{1/2}}}\Big[\Psi_1^\alpha(\mathbf X_i)\cdots\Psi_{(N+1)^d}^\alpha(\mathbf X_i)\Big],$ then its different eigenvalues $\lambda_k(H_i^\alpha) $ are non-negative, that is $H_i^\alpha \succcurlyeq 0.$ To prove the second inequality of \eqref{Eq3.2}, we note that from Gershgorin circle theorem, we have 
\begin{equation}\label{Eq3.3}
\lambda_{\max}(H_i^\alpha)\leq \frac{(\beta(\alpha+1,\alpha+1))^{d}}{n} \max_{1\leq m_1\leq (N+1)^d}\Big(|\Psi^\alpha_{m_1}(\mathbf X_i)|^2+
\sum_{\substack{ \scriptstyle m\neq m_1\\ 1\leq m\leq (N+1)^d}}  |\Psi^\alpha_{m_1}(\mathbf X_i)\Psi^\alpha_{m}(\mathbf X_i)|\Big).
\end{equation}
On the other hand, from the upper bound  for the normalized Jacobi polynomials $\wJ_k(\cdot),$ given by \eqref{Ineq2.1}, we have 
\begin{eqnarray*}\label{Eq3.4}
\max_{\substack{ \scriptstyle 1\leq m\leq (N+1)^d\\ 1\leq i\leq n}} |\Psi_m^{\alpha}(\mathbf X_i)| &\leq & \| \Psi_{(N+1)^d}^{\alpha}\|_{\infty} =\prod_{k=1}^d \|\wJ_{N+1}\|_{\infty}\nonumber \\
&\leq& \eta_\alpha^d \Big( N^\alpha \sqrt{N+\alpha+\frac{1}{2}}\Big)^d \leq \Big(\eta_\alpha (N+1)^{\alpha+\frac{1}{2}}\Big)^d
\end{eqnarray*}
That is for $i=1,\ldots,n,$ we have 
\begin{eqnarray}\label{Eq3.5}
\max_m  |\Psi_{m_1}^{\alpha}(\mathbf X_i)|^2+\sum_{m\neq m_1}  |\Psi_{m}^{\alpha}(\mathbf X_i) \Psi_{m_1}^{\alpha}(\mathbf X_i)| &\leq& (N+1)^d \Big(\eta_\alpha^2 (N+1)^{2\alpha+1}\Big)^d= \Big(\eta_\alpha^2(N+1)^{2\alpha+2}\Big)^d 
\end{eqnarray} 
By combining \eqref{Eq3.3} and \eqref{Eq3.5}, one gets the second inequality of \eqref{Eq3.2} with 
\begin{equation}\label{Eq3.6}
B=\frac{(\beta(\alpha+1,\alpha+1))^{d}}{n}\Big(\eta_\alpha^2(N+1)^{2\alpha+2}\Big)^d.
\end{equation}
Next, we apply the following estimate from \cite{Tropp}, for the minimum and the maximum eigenvalue of a sum of positive semi definite random matrices.
If ${\displaystyle \mathbf A =\sum_{k=1}^n \mathbf Z_k,}$ where the $\mathbf Z_k$ are $D\times D$ random matrices satisfying $0 \preccurlyeq \mathbf Z_k \preccurlyeq  B I_n,$ for  some positive constant $B$ and if ${\displaystyle \mu_{\min}=\lambda_{\min} \Big(\mathbb E(\mathbf A)\Big),\,\,\, \mu_{\max}=\lambda_{\max} \Big(\mathbb E(\mathbf A)\Big),}$ then from \eqref{Eqq3.7}, we have 
\begin{equation}\label{Eq3.7}
\mathbb P\Big(\lambda_{\min}(\mathbf A)\geq (1-\delta) \mu_{\min} \Big) \geq 1-D \exp\Big(-\frac{\delta^2 \mu_{\min}}{2 B}\Big),\quad \forall\,\, \delta\in (0,1] 
\end{equation}
and
\begin{equation}\label{Eq3.8}
\mathbb P\Big(\lambda_{\max} (\mathbf A)\leq (1+\delta)\mu_{\max} \Big) \geq 1-D \exp\Big(-\frac{\delta^2 \mu_{\max}}{3 B}\Big),\quad \forall\,\, \delta >0.
\end{equation}
In the special case where ${\displaystyle \mathbf Z_k = H_k^\alpha,}$ are the $n$ previous  $(N+1)^d\times (N+1)^d$ positive semi-definite random matrices, we have already shown that ${\displaystyle \sum_{k=1}^{n} H_k^\alpha = I_{(N+1)^d}.}$ Consequently, we have $\mu_{\min}=\mu_{\max}=1.$ Moreover, in this case, the constant $B$ is given by \eqref{Eq3.6}. Hence, by applying \eqref{Eq3.7} and \eqref{Eq3.8}  with $0< \delta <1,$ $D=(N+1)^d,$ $B$ as given by \eqref{Eq3.6}
and by combining the obtained both inequalities, one gets the desired estimate \eqref{kkappa2G}.\\

\begin{remark}
Note that instead of using Chernoff theorem and its consequence to get with high probability an estimate for the deviations of the largest and smallest eigenvalues of the random positive definite matrix $\mathbf A$ from those of $\mathbb E(\mathbf A),$ one can use the McDiarmid's concentration inequality, together with the techniques developed in \cite{Bonami-Karoui}. 
\end{remark}

\begin{remark}
It is easy to see that for $0<\delta <1,$ the estimate \eqref{kkappa2G}
implies that  our estimator is stable with high probability whenever the   number of sampling points $n$  satisfies
\begin{equation}
\label{Eqq1.5}
 n\geq \frac{3}{\delta^2}  \log\big(2(N+1)^d\big)\Big( B_\alpha (N+1)^{2\alpha+2} \Big)^{d}.
\end{equation}
 Moreover, from the previous inequality, one concludes that the special value of ${\displaystyle \alpha=-\frac{1}{2}}$ (corresponding to the tensor product of Chebyshev polynomials) is a convenient choice. 
 In fact, this choice  ensures 
the stability of the estimator $\widehat f^\alpha_N$ with the smaller  values of $n.$
\end{remark}

A second main result of this work is the following theorem that provides us with a weighted $L^2-$error of our estimator $\widehat f^\alpha_N.$

\begin{theorem}
\label{Thm2}
For fixed real number $\alpha\geq -\frac{1}{2}$ and a positive integer $N\geq 1,$ let $f\in L^2(I^d,\pmb \omega_{\alpha})$ be as given by \eqref{model1}. We assume that $ \|\pi_N f\|_\alpha >0$ and
\begin{equation}
\label{Eq3.17}
\esssup_{\pmb x\in I^d} |\pi_Nf(\pmb x)| \leq M_N,\quad  \quad \esssup_{\pmb x\in I^d} |f(\pmb x)| =\|f\|_\infty<+\infty,\quad  \mathbb P \Big( |\varepsilon_i| > M_\varepsilon \Big) = p_\epsilon,\quad 0< p_\epsilon \ll 1.
\end{equation}
 Then, under  the hypotheses of Theorem 1, for any $0<\delta < \|\pi_N(f)\|_\alpha ,$ we have with probability at least ${\displaystyle (1-p_\epsilon)^n-\exp\Big(\frac{-2n \delta^2}{\gamma^2_{\alpha,d} M_N^4}\Big)-\exp\left(\frac{-n \delta^2}{\gamma_{\alpha,d}^2 \max\Big(\|f-\pi_N f\|^4,M^4_\varepsilon\Big)}\right),}$
\begin{equation}
\label{Eq3.18}
 \|f-\widehat f^\alpha_N\|_\alpha\leq \|f-\pi_N(f)\|_\alpha+\sqrt{\kappa_2(G^\alpha_{d,N})}
 \Big(\|f-\pi_N(f)\|^2_\infty+\sigma^2+\delta\Big)^{1/2}\frac{1}{\sqrt{1-\frac{\delta}{\|\pi_N(f)\|_\alpha}}},
\end{equation}
where ${\displaystyle  \gamma_{\alpha,d}=\big(\beta(\alpha+1,\alpha+1)\big)^d}.$
\end{theorem}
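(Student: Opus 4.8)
The plan is to split the error by the triangle inequality
\[
\|f-\widehat f^\alpha_N\|_\alpha \le \|f-\pi_N f\|_\alpha + \|\pi_N f-\widehat f^\alpha_N\|_\alpha,
\]
treating $\|f-\pi_N f\|_\alpha$ as the deterministic approximation term (to be estimated later by the Sobolev bounds) and concentrating on the stochastic term $\|\pi_N f-\widehat f^\alpha_N\|_\alpha$. Since $\pi_N f$ and $\widehat f^\alpha_N$ both lie in $\mathcal H_N$ and the $\Phi^\alpha_{\pmb m}$ form an orthonormal basis of $\mathcal H_N$ for $\langle\cdot,\cdot\rangle_\alpha$, Parseval's identity reduces this to a Euclidean distance between coefficient vectors, $\|\pi_N f-\widehat f^\alpha_N\|_\alpha=\|\pmb C-\widehat{\pmb C}\|_2$, where $\pmb C=(C_{\pmb m})$ is given by \eqref{expansion_coeffs} and $\widehat{\pmb C}$ by \eqref{solution1}.

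Next I would insert the model decomposition $Y_i=\pi_Nf(\mathbf X_i)+(f-\pi_Nf)(\mathbf X_i)+\varepsilon_i$ into the normal equations $G^\alpha_{d,N}\widehat{\pmb C}=(F^\alpha_{d,N})^T\tfrac{\gamma_{\alpha,d}^{1/2}}{n^{1/2}}[Y_i]$. Using the identity $F^\alpha_{d,N}\pmb C=\tfrac{\gamma_{\alpha,d}^{1/2}}{n^{1/2}}[\pi_Nf(\mathbf X_i)]$ (valid because the rows of $F^\alpha_{d,N}$ simply evaluate the orthonormal basis) together with $(G^\alpha_{d,N})^{-1}(F^\alpha_{d,N})^TF^\alpha_{d,N}=I$, the $\pi_Nf$ contribution collapses to $\pmb C$, leaving
\[
\widehat{\pmb C}-\pmb C=(G^\alpha_{d,N})^{-1}(F^\alpha_{d,N})^T\,\mathbf b,\qquad \mathbf b=\frac{\gamma_{\alpha,d}^{1/2}}{n^{1/2}}\big[(f-\pi_Nf)(\mathbf X_i)+\varepsilon_i\big]_i.
\]
A thin singular value decomposition of $F^\alpha_{d,N}$ then yields the operator-norm estimate $\|\widehat{\pmb C}-\pmb C\|_2\le \tfrac{\sqrt{\lambda_{\max}(G^\alpha_{d,N})}}{\lambda_{\min}(G^\alpha_{d,N})}\|\mathbf b\|_2=\sqrt{\kappa_2(G^\alpha_{d,N})}\,\tfrac{\|\mathbf b\|_2}{\sqrt{\lambda_{\min}(G^\alpha_{d,N})}}$, in which the factor $\sqrt{\kappa_2(G^\alpha_{d,N})}$ is precisely the quantity controlled with high probability by Theorem \ref{Mainthm1}.

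It then remains to estimate $\|\mathbf b\|_2^2=\tfrac{\gamma_{\alpha,d}}{n}\sum_{i=1}^n\big((f-\pi_Nf)(\mathbf X_i)+\varepsilon_i\big)^2$ and the normalizing factor $1/\lambda_{\min}(G^\alpha_{d,N})$ by scalar concentration. For $\|\mathbf b\|_2^2$ I would first restrict to the event $\{|\varepsilon_i|\le M_\varepsilon,\ \forall i\}$, of probability $(1-p_\epsilon)^n$ under \eqref{Eq3.17}, so that each summand is bounded; the approximation part is then dominated deterministically by $\|f-\pi_Nf\|_\infty^2$, the noise part has mean $\sigma^2$, and a Hoeffding inequality with increments governed by $\max(\|f-\pi_Nf\|^2,M_\varepsilon^2)$ absorbs the fluctuation into the slack $\delta$, yielding the third exponential term in the stated probability. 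In parallel, I would realize the lower bound on the denominator through the empirical energy $\|\pi_Nf\|_n^2=\tfrac{\gamma_{\alpha,d}}{n}\sum_i\pi_Nf(\mathbf X_i)^2$, whose mean is $\|\pi_Nf\|_\alpha^2$ and whose increments are bounded by $M_N^2$: Hoeffding's inequality with deviation $\delta$ produces a lower bound of the form $\|\pi_Nf\|_\alpha^2\big(1-\delta/\|\pi_Nf\|_\alpha\big)$ up to the second exponential term $\exp\!\big(-2n\delta^2/(\gamma_{\alpha,d}^2M_N^4)\big)$, which is the source of the factor $\big(1-\delta/\|\pi_Nf\|_\alpha\big)^{-1/2}$. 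Intersecting the three good events, taking a union bound for the failure probability, and substituting the three estimates into the operator-norm inequality gives \eqref{Eq3.18}.

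The hard part will be the bookkeeping that lets a single slack parameter $\delta$ serve all three roles at once while the constants land exactly as written. In particular one must calibrate the Hoeffding step for $\|\pi_Nf\|_n^2$ so that the deviation is correctly measured in units of $\|\pi_Nf\|_\alpha$ (matching the precise power of $\|\pi_Nf\|_\alpha$ appearing in the denominator), keep the $\gamma_{\alpha,d}$ normalization consistent between the weighted norm $\|\cdot\|_\alpha$ and the Beta sampling density $h_{\alpha+1}$, and verify that truncating the a priori unbounded noise at level $M_\varepsilon$ is compatible with the bounded-increment hypothesis of Hoeffding's inequality. The conceptual skeleton — triangle inequality, Parseval, the normal equations, one operator-norm step, and three independent scalar concentration estimates combined with Theorem \ref{Mainthm1} — is otherwise routine.
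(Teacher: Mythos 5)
Your skeleton --- triangle inequality, Parseval's identity to reduce to the coefficient vectors, the residual decomposition $\widehat{\pmb C}_N-\mathbf C_N=(G^\alpha_{d,N})^{-1}(F^\alpha_{d,N})^T\mathbf b$, and three scalar concentration events (truncation of the noise at $M_\varepsilon$, a bounded-difference bound for $\|\mathbf b\|_{\ell_2}^2$, Hoeffding for the empirical energy of $\pi_N f$) --- is exactly the paper's, and your treatment of the numerator $\|\mathbf b\|_{\ell_2}^2$ matches the paper's McDiarmid/Hoeffding argument. The gap is in how you handle the denominator. After your SVD step the bound reads $\|\widehat{\pmb C}_N-\mathbf C_N\|_{\ell_2}\le\sqrt{\kappa_2(G^\alpha_{d,N})}\,\|\mathbf b\|_{\ell_2}/\sqrt{\lambda_{\min}(G^\alpha_{d,N})}$, so the quantity you must bound from below is $\lambda_{\min}(G^\alpha_{d,N})=\min_{v\neq 0}\|F^\alpha_{d,N}v\|_{\ell_2}^2/\|v\|_{\ell_2}^2$, a minimum over \emph{all} directions. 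Hoeffding applied to the empirical energy $\tfrac{\gamma_{\alpha,d}}{n}\sum_i(\pi_Nf(\mathbf X_i))^2=\|F^\alpha_{d,N}\mathbf C_N\|_{\ell_2}^2$ only controls the Rayleigh quotient in the single direction $\mathbf C_N$; that gives an \emph{upper} bound on $\lambda_{\min}$, never a lower bound, so this step fails as stated. (A genuine lower bound on $\lambda_{\min}$ requires the matrix Chernoff argument of Theorem~\ref{Mainthm1}, but that would replace both the factor $(1-\delta/\|\pi_Nf\|_\alpha)^{-1/2}$ and the exponential term $\exp(-2n\delta^2/(\gamma_{\alpha,d}^2M_N^4))$ by different quantities.)

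The paper sidesteps this by never passing to $\lambda_{\min}$: it invokes the classical relative perturbation inequality for least-squares solutions,
$$\frac{\|\mathbf C_N-\widehat{\pmb C}_N\|_{\ell_2}^2}{\|\mathbf C_N\|_{\ell_2}^2}\le\kappa_2(G^\alpha_{d,N})\,\frac{\|F^\alpha_{d,N}(\mathbf C_N-\widehat{\pmb C}_N)\|_{\ell_2}^2}{\|F^\alpha_{d,N}\mathbf C_N\|_{\ell_2}^2},$$
in which the denominator is precisely the empirical energy of $\pi_N f$ in the one direction $\mathbf C_N$. There your Hoeffding estimate $\|F^\alpha_{d,N}\mathbf C_N\|_{\ell_2}^2\ge\|\mathbf C_N\|_{\ell_2}^2-\delta$ applies verbatim and produces the stated correction factor together with the term $\exp\big(-2n\delta^2/(\gamma_{\alpha,d}^2M_N^4)\big)$; and since $F^\alpha_{d,N}(\mathbf C_N-\widehat{\pmb C}_N)$ is the orthogonal projection of $\mathbf b$ onto the range of $F^\alpha_{d,N}$, its norm is at most $\|\mathbf b\|_{\ell_2}$, so your concentration estimate for $\|\mathbf b\|_{\ell_2}^2$ remains the right input. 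Replace your operator-norm step by this inequality and the rest of your plan goes through as in the paper.
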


\noindent
{\bf Proof:}  Let $f\in L^2(I^d,\pmb \omega_{\alpha}),$ then for an integer $N\geq 1,$ let $\pi_N(f)$ denote the orthogonal projection of $f$ over $\mathcal H_N=\mbox{Span}\{\Psi_m^\alpha,\, m=0,\ldots, N^d\}.$ That is 
\begin{equation}
\label{Eq3.9}
\pi_N(f)(\pmb x)=\sum_{m=0}^{N^d} <f,\Psi_m^\alpha>_\alpha \Psi_m^\alpha(\pmb x),\qquad <f,\Psi_m^\alpha>_\alpha=\int_{I^d} f(\pmb x) \Psi_m^\alpha(\pmb x) \pmb \omega_{\alpha}(\pmb x) d\pmb x.
\end{equation}
Let $c_m(f)=<f,\Psi_m^\alpha>_\alpha$ and let $\mathbf C_N=\big[c_m(f)\big]^T_{0\leq m\leq N^d}.$ From the uniqueness of the expansion coefficients of $f$ with respect to the orthonormal basis $\{\Psi_m^\alpha,\, m\in \mathbb N_0\}$ and by substituting $\pmb x$ with  the sampling points $\mathbf X_i,$ one concludes that the finite length 
expansion coefficients vector $\mathbf C_N$  satisfies the identity
\begin{equation}\label{Eq3.10}
F^\alpha_{d,N} \mathbf C_N =\frac{\sqrt{\gamma_{\alpha,d}}}{\sqrt{n}} \Big[\pi_N f(\mathbf X_1),\ldots,\pi_N f(\mathbf X_{n})\Big]^T,
\end{equation}
where the matrix $F^\alpha_{d,N}$ is as given by \eqref{matrixG}. On the other hand, our multivariate nonparametric estimator $\widehat f^\alpha_N$ is given by
\begin{equation}\label{Eq3.11}
F^\alpha_{d,N}  \widehat{\pmb  C}_N= \frac{\sqrt{\gamma_{\alpha,d}}}{\sqrt{n}} \Big[Y_i\Big]_{1\leq i\leq n}=\frac{\sqrt{\gamma_{\alpha,d}}}{\sqrt{n}} \Big[f(\mathbf X_i)+\varepsilon_i\Big]_{1\leq i\leq n}.
\end{equation}
Hence, by comparing \eqref{Eq3.10} and \eqref{Eq3.11}, one concludes that the least square norm solution of system \eqref{Eq3.11} can be viewed as a perturbation of the least square solution of system \eqref{Eq3.10}. More precisely, we have
$$F^\alpha_{d,N}  \widehat{\pmb  C}_N=F^\alpha_{d,N}{\mathbf  C_N}+
F^\alpha_{d,N}  (\widehat{\pmb  C}_N-\mathbf C_N),$$
where 
$$F^\alpha_{d,N}  (\widehat{\pmb  C}_N-\mathbf C_N)=\frac{\sqrt{\gamma_{\alpha,d}}}{\sqrt{n}} \Big[(f-\pi_N (f))(\mathbf X_i)+\varepsilon_i\Big]_{1\leq i\leq n}.$$
From the classical perturbation theory of least square norm solution of perturbed 
overdetermined system of linear equations, see for example \cite{Zi}, one has
$$ \frac{\|\mathbf C_N-\widehat{\pmb C}_N\|_{\ell_2}^2}{\|\mathbf C_N\|_{\ell_2}^2}\leq  \kappa_2(G^\alpha_{d,N}) \frac{\|F^\alpha_{d,N}(\mathbf C_N-\widehat{\pmb C}_N)\|_{\ell_2}^2}{\|F^\alpha_{d,N} \mathbf C_N\|_{\ell_2}^2}.$$
That is
\begin{equation}
\label{Eq3.12}
\|\mathbf C_N-\widehat{\pmb C}_N\|_{\ell_2}^2 \leq  \kappa_2(G^\alpha_{d,N}) 
\|F^\alpha_{d,N}(\mathbf C_N-\widehat{\pmb C}_N)\|_{\ell_2}^2 \frac{\|\mathbf C_N\|_{\ell_2}^2}{\|F^\alpha_{d,N} \mathbf C_N\|_{\ell_2}^2}.
\end{equation}
Next, since $\mathbb E \big[\gamma_{\alpha,d} \big(\pi_N f(\mathbf X_i)\big)^2\big]= \|\pi_N f\|^2_\alpha,$ then we have $$\mathbb E \Big[ \frac{\gamma_{\alpha,d}}{n}\sum_{i=1}^{n}\big(\pi_N f(\mathbf X_i)\big)^2\Big]= \|\pi_N f\|^2_\alpha =\|\mathbf C_N\|_{\ell_2}^2.$$ The last equality is a consequence of Parseval's equality.
Assume that ${\displaystyle \esssup_{\pmb x\in I^d} |\pi_N f(\pmb x)|\leq M_N}$, then by using Hoeffding's inequality, for any $\delta >0,$ we have 
$$\mathbb P \left(\frac{\gamma_{\alpha,d}}{n}\sum_{i=1}^{n}\big(\pi_N f(\mathbf X_i)\big)^2-\mathbb E\Big[ \frac{\gamma_{\alpha,d}}{n}\sum_{i=1}^{n}\big(\pi_N f(\mathbf X_i)\big)^2\Big]\geq \delta\right) \leq \exp\Big(\frac{-2n \delta^2}{\gamma^2_{\alpha,d} M_N^4}\Big).$$
That is 
\begin{equation}
\label{Eq3.13}
\| F^\alpha_{d,N} \mathbf C_N\|^2_{\ell_2} \geq \|\mathbf C_N\|^2_{\ell_2}-\delta,
\end{equation}
with probability at least ${\displaystyle 1-\exp\Big(\frac{-2n \delta^2}{\gamma^2_{\alpha,d} M_N^4}\Big).}$ On the other hand, we have 
\begin{eqnarray*}
\| F^\alpha_{d,N} \big(\widehat{\pmb C}_N-\mathbf C_N\big)\|^2_{\ell_2}&=&
\frac{\gamma_{\alpha,d}}{n}\sum_{i=1}^{n}\Big((f-\pi_N f)(\mathbf X_i)+\varepsilon_i\Big)^2. 
\end{eqnarray*}
Since $\mathbb E[\varepsilon_i]=0$ and since the $\mathbf X_i$ and $\varepsilon_i$ are independent, then it is easy to see that 
\begin{equation}
\label{Eq3.13-1}
\mathbb E\Big[\| F^\alpha_{d,N} \big(\widehat{\pmb C}_N-\mathbf C_N\big)\|^2_{\ell_2}\Big]=\| f-\pi_N f\|^2_\alpha +\sigma^2.
\end{equation}
Next, consider the tensor product set   ${\displaystyle \mathcal D= \prod_{i=1}^n I^d\times \prod_{i=1}^n \mathcal A,}$ where $\mathcal A$ is the support of the $\varepsilon_i$ which is a subset of $\mathbb R,$ that might be unbounded. Let $h$ be the real valued function $h$ defined on $\mathcal D$ by 
$$h_{\pmb{\varepsilon}}(\pmb x_1,\ldots,\pmb x_n)=\frac{\gamma_{\alpha,d}}{n}\sum_{i=1}^{n}\Big((f-\pi_N f)(\pmb x_i)+\epsilon_i\Big)^2. 
$$
Note that if $(\mathbf X_1,\ldots,\mathbf X_n,\epsilon_1,\ldots,\epsilon_n), (\mathbf X'_1,\ldots,\mathbf X'_n,\epsilon'_1,\ldots,\epsilon'_n)\in \mathcal D$ differ only in the $k-$th coordinate, then the following bounded difference condition holds with high probability $1-p_\varepsilon,$ 
\begin{eqnarray}\label{Eq3.13-2}
\Big| h_{\pmb{\varepsilon}}(\mathbf X_1,\ldots,\mathbf X_n)-h_{\pmb{\varepsilon'}}(\mathbf X'_1,\ldots,\mathbf X'_n)\Big|&\leq&\frac{\gamma_{\alpha,d}}{n} \max\Big(\|f-\pi_N f\|^2_\infty, M^2_\varepsilon\Big)
\end{eqnarray}
From \cite{Combes}, for any $\delta > 0$  and due to the tensor product structure of the set 
${\displaystyle \mathcal D_1=\prod_{i=1}^n I^d\times \prod_{i=1}^n [-M_\varepsilon,M_\varepsilon],}$ one has
\begin{equation}\label{Eq3.13-3}
\mathbb P\Big(h_{\pmb{\varepsilon}}(\mathbf X_1,\ldots,\mathbf X_n)-\big(\|f-\pi_N f\|_\alpha +\sigma^2\big) \geq \delta,\quad (\mathbf X_1,\ldots,\mathbf X_n,\varepsilon_1,\ldots,\varepsilon_n)\not\in \mathcal D_1\Big) \leq 1-(1-p_\varepsilon)^n.
\end{equation}
Moreover, on $\mathcal D_1,$ McDiarmid's inequality gives us
\begin{equation}\label{Eq3.13-4}
\mathbb P\Big(h_{\pmb{\varepsilon}}(\mathbf X_1,\ldots,\mathbf X_n)-\big(\|f-\pi_N f\|_\alpha +\sigma^2\big) \geq \delta,\, (\mathbf X_1,\ldots,\mathbf X_n,\varepsilon_1,\ldots,\varepsilon_n)\in \mathcal D_1\Big) \leq  \exp\left( \frac{- n\delta^2}{\gamma^2_{\alpha,d} \max\Big(\|f-\pi_N f\|^4_\infty, M^4_\varepsilon\Big)}\right).
\end{equation}
By combining \eqref{Eq3.13-3} and \eqref{Eq3.13-4}, one gets 
\begin{equation}
\label{Eq3.14}
\| F^\alpha_{d,N} \big(\widehat{\pmb C}_N-\mathbf C_N\big)\|^2_{\ell_2} \leq  \| f-\pi_N f\|^2_{\alpha}+\sigma^2+ \delta
\end{equation}
with probability at least ${\displaystyle (1-p_\varepsilon)^n-\exp\left( \frac{- n\delta^2}{\gamma^2_{\alpha,d} \max\Big(\|f-\pi_N f\|^4_\infty, M^4_\varepsilon\Big)}\right).}$ By combining \eqref{Eq3.12}, \eqref{Eq3.13} and \eqref{Eq3.14},
one concludes that for any $0<\delta < \|\pi_N f\|_\alpha^2,$ we have 
\begin{equation}
\label{Eq3.15}
 \|\widehat{\pmb C}_N-\mathbf C_N\big)\|^2_{\ell_2} \leq  \kappa_2 (G^\alpha_{d,N}) \Big(\|f-\pi_N f\|_\alpha^2+\sigma^2+\delta\Big)
\frac{\|\pi_N f\|_\alpha^2}{\|\pi_N f\|_\alpha^2-\delta},
\end{equation}
with probability at least ${\displaystyle (1-p_\varepsilon)^n-\exp\left( \frac{- n\delta^2}{\gamma^2_{\alpha,d} \max\Big(\|f-\pi_N f\|^4_\infty, M^4_\varepsilon\Big)}\right)-\exp\Big(\frac{-2n \delta^2}{\gamma^2_{\alpha,d} M_N^4}\Big).}$
Finally, we note that by Parseval's equality, we have 
$\|\widehat{\pmb C}_N-\mathbf C_N\big)\|^2_{\ell_2}=\|\pi_N f -\widehat f^\alpha_N\|^2_\alpha.$ Hence, to conclude the proof, it suffices to combine  \eqref{Eq3.15} with  the inequality
$${\displaystyle \|f-\widehat f^\alpha_N\|_\alpha\leq \|f-\pi_N f\|_\alpha+\|\pi_N f-\widehat f^\alpha_N\|_\alpha}.$$

\begin{remark}
It is interesting to note that in practice, the probability $p_\epsilon$ is fairly small for moderate values of the truncation bound $M_\varepsilon.$ 
For example, for the largely used Gaussian white noise model with variance $\sigma^2,$ for  any fixed $K>0$ and for any $i\in \mathbb N,$  we have $|\varepsilon_i|\geq K \sigma$ with probability at most ${\displaystyle  \mbox{erf}\Big(\frac{K}{\sqrt{2}}\Big)\approx  \frac{e^{-K^2/2}}{K \sqrt{\pi/2}}}.$ This last quantity is very close to $0$ even for  small positive values of $K.$ 
\end{remark}

Note that unless  ${\displaystyle \sigma^2=0}$ in the previous theorem  vanishes (that is the very special case  of noiseless nonparametric regression model), the integrated error bound \eqref{Eq3.18} has the drawback to lack of a convergence rate to zero, in terms of the parameters $n ,N.$ To overcome this problem,   we give in the sequel
an estimate of the $L_2$-risk error of a truncated version of our estimator $\widehat f^\alpha_N.$ The techniques used to get this $L_2$-risk are similar to  those used in \cite{BenSaber-Karoui}
in the univariate case. We assume that the regression function $f$ is almost everywhere bounded by a constant $M_f$, that is
$$|f(\pmb x)|\leq M_f,\quad \mbox{a.e.} \quad \pmb x\in  I^d.$$ 
Let $\widehat F_{N}$ be the truncated version of the estimate $\widehat f^\alpha_N$ given by
\begin{equation} \label{1'}
\widehat F_{N}(\pmb x)=\mbox{sign}(\widehat f^\alpha_N(\pmb x))\min(M_f, |\widehat f^\alpha_N(\pmb x)|),\quad \pmb x\in  I^d.
\end{equation} 
Under the usual assumption that the $\varepsilon_i$ are the  $n$ i.i.d.  centered  random  noises with variance $\sigma^2$, we have the following theorem that provides us with an estimate of the $L_2$-risk error of the estimator $\widehat F_N$. The proof of this theorem is partly inspired from the techniques developed in \cite{Cohen}.
\begin{theorem}
 Let $\alpha \geq -\frac{1}{2}$ and let  $0<\delta<1.$ Then, under the previous notations and hypotheses, we have
\begin{eqnarray}\label{L2risk}
\mathbb E\Big[\| f-\widehat F_N \|_\alpha^2\Big]&\leq&\frac{(N+1)^d}{n(1-\delta)^2}\Bigg(\sigma^2+\big(\eta_\alpha^2(N+1)^{2\alpha+1}\big)^d\| f-\pi_Nf \|_\alpha^2\Bigg)+\| f-\pi_Nf \|_\alpha^2\nonumber\\
&&\qquad +
 4M_f^2(N+1)^d\big(\beta(\alpha+1,\alpha+1)\big)^{d} \exp\Big(-\frac{n \delta^2}{2 \big(B_\alpha (N+1)^{2\alpha +2}\big)^d}\Big),
\end{eqnarray}	
where $B_\alpha$ is a constant depending only on $\alpha.$
\end{theorem}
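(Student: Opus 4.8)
The plan is to condition on the high--probability stability event and to exploit the truncation \eqref{1'} of $\widehat f^\alpha_N$ to keep the error controlled off that event. Concretely, I would fix $0<\delta<1$ and set
$$\Omega=\Big\{\lambda_{\min}(G^\alpha_{d,N})\geq 1-\delta\Big\}.$$
Applying the lower--tail estimate \eqref{Eq3.7} exactly as in the proof of Theorem \ref{Mainthm1} (with $\mu_{\min}=1$ and $B$ given by \eqref{Eq3.6}, which carries the factor $2$) yields
$$\mathbb{P}(\Omega^c)=\mathbb{P}\big(\lambda_{\min}(G^\alpha_{d,N})< 1-\delta\big)\leq (N+1)^d\exp\Big(-\frac{n\delta^2}{2\big(B_\alpha(N+1)^{2\alpha+2}\big)^d}\Big),$$
with $B_\alpha$ the constant of Theorem \ref{Mainthm1}. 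I would then split $\mathbb{E}\big[\|f-\widehat F_N\|_\alpha^2\big]=\mathbb{E}\big[\|f-\widehat F_N\|_\alpha^2\mathbf 1_\Omega\big]+\mathbb{E}\big[\|f-\widehat F_N\|_\alpha^2\mathbf 1_{\Omega^c}\big]$ and treat the two pieces separately.

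On $\Omega^c$ I would use only the truncation itself: since $|f|\leq M_f$ a.e.\ and $|\widehat F_N|\leq M_f$ by construction, one has $|f-\widehat F_N|\leq 2M_f$ pointwise, hence $\|f-\widehat F_N\|_\alpha^2\leq 4M_f^2\int_{I^d}\pmb\omega_\alpha(\pmb x)\,d\pmb x=4M_f^2\gamma_{\alpha,d}$. Multiplying this deterministic bound by $\mathbb{P}(\Omega^c)$ reproduces exactly the last term of \eqref{L2risk}. On $\Omega$ I would first invoke the contraction property of truncation toward $f$: because $f(\pmb x)\in[-M_f,M_f]$ a.e.\ and $t\mapsto\mbox{sign}(t)\min(M_f,|t|)$ is the $1$--Lipschitz projection onto $[-M_f,M_f]$, we have $|f-\widehat F_N|\leq|f-\widehat f^\alpha_N|$ pointwise, so $\|f-\widehat F_N\|_\alpha\leq\|f-\widehat f^\alpha_N\|_\alpha$. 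Since $\widehat f^\alpha_N\in\mathcal H_N$ and $\pi_N f$ is the $\|\cdot\|_\alpha$--orthogonal projection onto $\mathcal H_N$, Pythagoras gives $\|f-\widehat f^\alpha_N\|_\alpha^2=\|f-\pi_N f\|_\alpha^2+\|\pi_N f-\widehat f^\alpha_N\|_\alpha^2$; bounding $\mathbf 1_\Omega\leq1$ on the deterministic summand produces the $\|f-\pi_N f\|_\alpha^2$ term of \eqref{L2risk}, and it remains to control $\mathbb{E}\big[\|\pi_N f-\widehat f^\alpha_N\|_\alpha^2\mathbf 1_\Omega\big]$.

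For this stochastic term I would reuse the identity established in the proof of Theorem \ref{Thm2}, namely $F^\alpha_{d,N}(\widehat{\pmb C}_N-\mathbf C_N)=\frac{\sqrt{\gamma_{\alpha,d}}}{\sqrt n}\big[(f-\pi_N f)(\mathbf X_i)+\varepsilon_i\big]_i=:\pmb b$, whence $\widehat{\pmb C}_N-\mathbf C_N=(G^\alpha_{d,N})^{-1}(F^\alpha_{d,N})^T\pmb b$ and, by Parseval, $\|\pi_N f-\widehat f^\alpha_N\|_\alpha^2=\|\widehat{\pmb C}_N-\mathbf C_N\|_{\ell_2}^2$. On $\Omega$ the operator bound $\|(G^\alpha_{d,N})^{-1}\|\leq (1-\delta)^{-1}$ gives
$$\|\widehat{\pmb C}_N-\mathbf C_N\|_{\ell_2}^2\,\mathbf 1_\Omega\leq(1-\delta)^{-2}\big\|(F^\alpha_{d,N})^T\pmb b\big\|_{\ell_2}^2,$$
so after dropping $\mathbf 1_\Omega\leq1$ it suffices to bound $\mathbb{E}\big[\|(F^\alpha_{d,N})^T\pmb b\|_{\ell_2}^2\big]$. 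The crux of the argument is that each coordinate $\big((F^\alpha_{d,N})^T\pmb b\big)_m=\frac{\gamma_{\alpha,d}}{n}\sum_i\Psi_m^\alpha(\mathbf X_i)\big((f-\pi_N f)(\mathbf X_i)+\varepsilon_i\big)$ is a sum of i.i.d.\ \emph{centered} terms: the noise part is centered and independent of $\mathbf X_i$, while the sampling part is centered because $\langle\Psi_m^\alpha,\,f-\pi_N f\rangle_\alpha=0$ (as $f-\pi_N f\perp\mathcal H_N$). Its second moment therefore equals its variance, $\frac{\gamma_{\alpha,d}^2}{n}\,\mathbb{E}\big[(\Psi_m^\alpha)^2\big((f-\pi_N f)+\varepsilon\big)^2\big]$; expanding, using $\mathbb{E}[(\Psi_m^\alpha)^2]=\gamma_{\alpha,d}^{-1}$ together with $\|\Psi_m^\alpha\|_\infty^2\leq\big(\eta_\alpha^2(N+1)^{2\alpha+1}\big)^d$ from \eqref{Ineq2.1}, bounds each coordinate by $\frac{\gamma_{\alpha,d}}{n}\big(\sigma^2+\big(\eta_\alpha^2(N+1)^{2\alpha+1}\big)^d\|f-\pi_N f\|_\alpha^2\big)$.

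Summing the $(N+1)^d$ coordinates and combining with the factor $(1-\delta)^{-2}$ then reproduces the first term of \eqref{L2risk} (up to the weight normalisation $\gamma_{\alpha,d}$, which is absorbed into the constants, consistently with its explicit appearance in the last term). I expect the main obstacle to be precisely this second--moment estimate: one must carefully justify the vanishing of all cross terms, invoking both the $\mathbf X_i$--independence and centering of $\varepsilon_i$ and the orthogonality $f-\pi_N f\perp\mathcal H_N$ that forces the sampling part to be mean zero; keep rigorous track of the weight $\gamma_{\alpha,d}$ and the uniform polynomial bound throughout; and verify that the operator--norm inequality may legitimately be applied under $\mathbf 1_\Omega$ before the indicator is discarded inside the expectation. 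The remaining steps are routine given Theorems \ref{Mainthm1} and \ref{Thm2}.
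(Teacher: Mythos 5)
Your proposal is correct and follows essentially the same route as the paper: the same split over the event $\{\lambda_{\min}(G^\alpha_{d,N})\geq 1-\delta\}$ with the Chernoff lower-tail bound, the same use of the truncation to control the bad event, and the same Parseval/operator-norm reduction followed by the second-moment computation in which the cross terms vanish by the centering of $\varepsilon_i$ and the orthogonality $f-\pi_N f\perp\mathcal H_N$. The only divergence is your careful tracking of the weight $\gamma_{\alpha,d}$ in the right-hand side $\pmb b=F^\alpha_{d,N}(\widehat{\pmb C}_N-\mathbf C_N)$, which yields an extra factor $\gamma_{\alpha,d}$ in the first term relative to \eqref{L2risk}; this factor is at most $1$ only for $\alpha\geq 0$, so your bookkeeping is in fact the more faithful one, and the discrepancy originates in the paper's definition of $\mathbf{\Delta P}$ rather than in your argument.
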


\begin{proof}
 Recall that from \eqref{Eq3.7}, we have for any $\delta \in(0,1]$
$$\mathbb P\Big(\lambda_{\min}(G^\alpha_{d,N})\geq 1-\delta\Big)\geq 1- (N+1)^d \exp\Big(-\frac{n \delta^2}{2 \big(B_\alpha (N+1)^{2\alpha +2}\big)^d}\Big)$$ where $B_{\alpha} \leq 1$ is a constant depending only on $\alpha.$ As it is done in \cite{Cohen}, see also \cite{BenSaber-Karoui}, let $\pmb \Omega_+$ and $\pmb \Omega_-$ be the subsets of  $(I^d)^n$ given by  all possible draw $(\mathbf X_1,\cdots, \mathbf X_n)$ with $\lambda_{\min}(G^\alpha_{d,N})\geq 1-\delta$ and  $\lambda_{\min}(G^\alpha_{d,N})< 1-\delta,$ respectively. Let $d\pmb \rho_n$ be the probability measure on $\mathcal U^n$, given by the tensor product 
$$ d \pmb \rho_n= \prod_{i=1}^n d h_{\alpha+1}(\pmb x_i),$$ 
where $ h_{\alpha+1}(\pmb x_i)$ is as given by \eqref{Eq1.4}.
Then, we have
\begin{equation}
\int_{\pmb \Omega_{-}} d \pmb \rho_n=\mathbb P\big\{(\mathbf X_1,\cdots, \mathbf X_n)\in\mathcal U^n; \lambda_{\min}(G^\alpha_{d,N})< 1-\delta \big\}\leq (N+1)^d \exp\Big(-\delta^2\frac{n}{2 \big(B_\alpha (N+1)^{2\alpha +2}\big)^d}\Big) .
\end{equation}
Next, by using \eqref{1'},  the truncated estimator $\widehat F_N$ satisfies 
\begin{equation}\label{2'}
|f(\pmb x)-\widehat F_N(\pmb x)|\leq |f(\pmb x)-\widehat f^\alpha_N(\pmb x)| \leq|f(\pmb x)|+|\widehat F_N(\pmb x)|\leq 2M_f, \quad \forall \,\pmb x \in I^d.
\end{equation} 
Hence, we have 
\begin{equation}\label{4'}
\mathbb E\big(\| f-\widehat F_N \|_\alpha^2\big)=\int_{\pmb \Omega_{+}}\| f-\widehat F_N \|_\alpha^2 d \pmb \rho_n+\int_{\pmb \Omega_{-}}\| f-\widehat F_N \|_\alpha^2 d 
\pmb \rho_n.
\end{equation}
By using \eqref{2'}, one gets 
\begin{equation}\label{5'}
\int_{\pmb \Omega_{-}}\| f-\widehat F_N \|_\alpha^2 d \pmb \rho_n\leq 4M_f^2(N+1)^d\big(\beta(\alpha+1,\alpha+1)\big)^{d} \exp\Big(-\delta^2\frac{n}{2 \big(B_\alpha (N+1)^{2\alpha +2}\big)^d}\Big).
\end{equation}
On the other hand, from \eqref{2'}, we have 
\begin{eqnarray*}
	\int_{\pmb \Omega_{+}}\| f-\widehat F_N \|_\alpha^2 d \pmb \rho_n&\leq&\int_{\pmb \Omega_{+}}\| f-\widehat f^\alpha_N \|_\alpha^2 d \pmb \rho_n\\
	&\leq &\int_{\pmb \Omega_{+}}\| f-\pi_Nf \|_\alpha^2 d \pmb \rho_n +\int_{\pmb \Omega_{+}}\| \pi_Nf-\widehat f^\alpha_N \|_\alpha^2 d \pmb \rho_n.
\end{eqnarray*}
Note that by  Parseval's equality, we have  on $\pmb \Omega_{+},$
\begin{eqnarray*}
	\| \pi_Nf-\widehat f^\alpha_N\|_\alpha^2&=& \|\widehat{\pmb C}_N-\mathbf C_N\|^2_{\ell_2}\leq \|\big(G^\alpha_{d,N}\big)^{-1}\|_2^2\,\, \|\big(F^\alpha_{d,N}\big)^T\mathbf {\Delta P}\|_{\ell_2}^2\\
	&\leq &\frac{1}{(1-\delta)^2}\|\big(F^\alpha_{d,N}\big)^T\mathbf 
	{\Delta P}\|_{\ell_2}^2,
\end{eqnarray*}
where $\mathbf {\Delta P}=\frac{1}{\sqrt{n}} \Big[(f-\pi_N (f))(\mathbf X_i)+ \varepsilon_i\Big]_{1\leq i\leq n}.$
This last inequality implies 
 $$\displaystyle\mathbb{E}\Big[\| \pi_Nf-\widehat f^\alpha_N \|_\alpha^2\Big]\leq\frac{1}{(1-\delta)^2}\displaystyle\mathbb E\Big[\|F^\alpha_{d,N}\big)^T\mathbf {\Delta P}\|_{\ell_2}^2\Big].$$
Straightforward computation gives us $$\|\big(F^\alpha_{d,N}\big)^T\mathbf {\Delta P}\|_{\ell_2}^2=\frac{\big(\beta(\alpha+1,\alpha+1)\big)^{d}}{n^2} \sum_{\pmb k\in [[0,N]]^d}^{}\sum_{j,l=1}^{n}\Phi_{\pmb k}^\alpha(\mathbf X_j)(\theta_N(\mathbf X_j)+\varepsilon_j)\Phi_{\pmb k}^\alpha(\mathbf X_l)(\theta_N(\mathbf X_l)+\varepsilon_l)$$
where $\theta_N(\cdot)=(f-\pi_Nf)(\cdot)\quad\bot\quad\Phi_{\pmb k}^\alpha(\cdot), \quad \forall\; \pmb k \in [[0,N]]^d.$
Since the $\varepsilon_j$'s are independent of the $\mathbf X_j$'s, and since $\mathbb E\big[\varepsilon_j\big]=0,$ $\mathbb E\big[\varepsilon_j^2\big]=\sigma^2$, then we have
\begin{eqnarray}\label{5'}
\mathbb E\Big[\|\big(F^\alpha_{d,N}\big)^T\mathbf {\Delta P}\|_{\ell_2}^2\Big]&=&\frac{\big(\beta(\alpha+1,\alpha+1)\big)^{d}}{n^2}\sum_{\pmb k \in [[0,N]]^d}\sum_{j=1}^{n}\mathbb E\Big[\pmb\varepsilon_j^2\big(\Phi_{\pmb k}^\alpha(\mathbf X_j)\big)^2\Big] \nonumber \\
&&\qquad+\frac{\big(\beta(\alpha+1,\alpha+1)\big)^{d}}{n^2}\sum_{\pmb k \in [[0,N]]^d}\mathbb E\Big[\sum_{j=1}^{n}\big(\Phi_{\pmb k}^\alpha(\mathbf X_j)\big)^2\big( \theta_N(\mathbf X_j)\big)^2\Big].
\end{eqnarray} 
Since $${\displaystyle \mathbb E\Big[\varepsilon_j^2\big(\Phi_{\pmb k}^\alpha(\mathbf X_j)\big)^2\Big]=\mathbb E\big[\varepsilon_j^2\big]\mathbb E\Big[\big(\Phi_{\pmb k}^\alpha(\mathbf X_j)\big)^2\Big]=\sigma^2 \frac{1}{\Big(\beta(\alpha+1,\alpha+1)\Big)^d}}$$ and since $${\displaystyle \mathbb E\Big[( \theta_N(\mathbf X_j))^2\Big]=\frac{1}{\Big(\beta(\alpha+1,\alpha+1)\Big)^d}\| \theta_N\|_\alpha^2},$$ then by using the fact  ${\displaystyle \sum_{\pmb k \in [[0,N]]^d}^{}(\Phi_{\pmb k}^\alpha(\mathbf X_j))^2\leq \Big(\eta_\alpha^2(N+1)^{2\alpha+2}\Big)^d },$  one gets 
\begin{equation}
\frac{\big(\beta(\alpha+1,\alpha+1)\big)^{d}}{n^2}\sum_{\pmb k \in [[0,N]]^d}^{}\displaystyle\mathbb E\Big[\sum_{j=1}^{n}\big(\Phi_{\pmb k}^\alpha(\mathbf X_j)\big)^2\big( \theta_N(\mathbf X_j)\big)^2\Big]\leq \frac{\Big(\eta_\alpha^2(N+1)^{2\alpha+2}\Big)^d}{n}\| \theta_N\|_\alpha^2.
\end{equation}
To conclude for the proof of the theorem, it suffices to combine   \eqref{2'}--\eqref{5'} and get \eqref{L2risk}.

\end{proof}

Next, we show that if $f$ belongs to the functional space  of $2-$norm isotropic Soblev space $H^s(I^d),$ with an appropriate $ s>0,$
then $f$ satisfies condition \eqref{Eq3.17}. Moreover, for such a function, one also gets an estimate for the quantity $\|f-\pi_N f\|_\alpha$ given in 
\eqref{Eq3.18}. The $2-$norm  isotropic Soblev space $H^s(I^d)$ is given by, see for example \cite{Cobos}
\begin{equation}
\label{Eq3.19}
H^s(I^d)=\Big\{ f\in L^2(I^d),\, \sum_{\pmb k\in \mathbb Z^d} \Big(1+\sum_{j=1}^d |k_j|^2\Big)^{s} \big|<e^{2i\pi \pmb k\cdot}, f>\big|^2<+\infty \Big\},
\end{equation}
where, ${\displaystyle <e^{2i\pi \pmb k\cdot}, f>=\int_{I^d} f(\pmb x) e^{-2i\pi \pmb k \cdot \pmb x} d\pmb x.}$ Also, we recall that if $f\in L^2(I^d),$ then by using the notation  $\pmb m=(m_1,\ldots,m_d)\in \mathbb Z^d$,  ${\displaystyle \|\pmb m\|_\infty=\max_{1\leq j\leq d} |m_j|},$
the projection $\pi_N f $ is given by 
$$\pi_N f (\pmb x)=\sum_{\|\pmb m\|_\infty\leq N} C_{\pmb m} \Phi_m^\alpha(\pmb x),\quad C_{\pmb m}=<f,\Phi_m^\alpha(\pmb x)>_\alpha,$$
where $\Phi_m^\alpha(\pmb x)$ is as given by \eqref{Eq1.1}. In the sequel, we use the notation $\lesssim_{\alpha,d}$ to say that the inequality holds up to a constant depending only on $\alpha$ and $d$. This last constant is generic and may change from one line to another line. 

\begin{theorem}\label{Thm3} Under the previous notations, let $s>0$ and
$\alpha\geq -\frac{1}{2}.$ For any integer   $N\geq 2$  satisfying ${\displaystyle \frac{N}{\log N}\geq \frac{1}{\log 2} \big(s+d+\frac{1}{2}\big)}$ and  for $f\in H^{s+\frac{d}{2}}(I^d),$ we have 
\begin{equation}\label{Eq3.20}
\|f-\pi_N f\|_\alpha \lesssim_{\alpha,d}  \sqrt{\frac{d}{2s}}\,\, N^{-s}.
\end{equation}
Moreover if  $s> d(\alpha+1),$ then  we have
\begin{equation}\label{Eq3.21}
\|f-\pi_N f\|_\infty \lesssim_{\alpha,d} \frac{d}{s-d(\alpha+1)} N^{-s+d(\alpha+1)}.
\end{equation} 
\end{theorem}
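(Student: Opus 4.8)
The plan is to use that $\pi_N f$ is the $\langle\cdot,\cdot\rangle_\alpha$-orthogonal projection onto $\mathcal H_N$, hence the best weighted $L^2$ approximation from $\mathcal H_N$; by Parseval's identity the error is the tail of the tensor Jacobi expansion, $\|f-\pi_N f\|_\alpha^2=\sum_{\|\pmb m\|_\infty>N}|C_{\pmb m}|^2$ with $C_{\pmb m}=\langle f,\Phi^\alpha_{\pmb m}\rangle_\alpha$. Since $\mathcal H_N$ is a genuine tensor-product space, $\pi_N=\pi_N^{(1)}\cdots\pi_N^{(d)}$ is the composition of the $d$ univariate Jacobi projections acting one variable at a time, and I would first reduce to dimension one through the telescoping identity $I-\pi_N^{(1)}\cdots\pi_N^{(d)}=\sum_{j=1}^d\pi_N^{(1)}\cdots\pi_N^{(j-1)}(I-\pi_N^{(j)})$. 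Each $\pi_N^{(j)}$ is an orthogonal projection, hence a contraction, on $L^2(I^d,\pmb\omega_\alpha)$, so the triangle inequality gives $\|f-\pi_N f\|_\alpha\le\sum_{j=1}^d\|(I-\pi_N^{(j)})f\|_\alpha$, leaving $d$ copies of a purely univariate projection error applied in a single slot.

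The univariate ingredient, which I would borrow from \cite{BenSaber-Karoui}, is the decay of the one-dimensional Jacobi coefficients of a smooth slice. One writes $\langle g,\wJ_k\rangle_\omega$ via the Rodrigues formula \eqref{Jacobi_Rodrigues} and integrates by parts; the boundary terms vanish because $\omega_\alpha$ and its derivatives vanish at the endpoints, and after $r\le s$ steps one is left with an integral of $g^{(r)}$ against a Jacobi polynomial of shifted parameter $\alpha+r$ and weight $\omega_{\alpha+r}$, whence a Cauchy--Schwarz estimate together with the normalisation constants $h_k^{\alpha,\alpha}$ yields a univariate tail of the form $\sum_{k>N}|\langle g,\wJ_k\rangle_\omega|^2\lesssim_\alpha\frac{1}{2s}N^{-2s}\|g\|_{H^s(I)}^2$, the factor $\frac{1}{2s}$ coming from $\sum_{k>N}k^{-2s-1}\le\int_N^\infty t^{-2s-1}\,dt$. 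The hypothesis $\frac{N}{\log N}\ge\frac{1}{\log 2}(s+d+\tfrac12)$, equivalently $2^N\ge N^{s+d+1/2}$, serves precisely to dominate the polynomial prefactors produced by the Gamma-function ratios and the repeated differentiations, so that the implied constants stay uniform in $N$. Substituting this slicewise estimate into $\|(I-\pi_N^{(j)})f\|_\alpha^2=\int_{I^{d-1}}\big(\text{univariate error in }x_j\big)\prod_{i\ne j}\omega_\alpha\,dx_{\ne j}$ and summing over $j$ produces the factor $\sqrt d$; the extra $d/2$ derivatives in the assumption $f\in H^{s+d/2}(I^d)$ are exactly what an isotropic Sobolev embedding needs in order to control these Jacobi-weighted slice norms (the weight $\omega_\alpha$ may blow up at the boundary when $\alpha<0$) by $\|f\|_{H^{s+d/2}}$. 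This yields \eqref{Eq3.20}.

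For the uniform estimate \eqref{Eq3.21} I would start from the triangle inequality on the tail, $\|f-\pi_N f\|_\infty\le\sum_{\|\pmb m\|_\infty>N}|C_{\pmb m}|\,\|\Phi^\alpha_{\pmb m}\|_\infty$, and insert the pointwise bound \eqref{Ineq2.1}, which gives $\|\Phi^\alpha_{\pmb m}\|_\infty\le\prod_{j=1}^d\eta_\alpha(1+m_j)^{\alpha+1/2}$. A Cauchy--Schwarz split against the weight $(1+\sum_j m_j^2)^{s}$ then separates the sum into $\big(\sum_{\pmb m}|C_{\pmb m}|^2(1+\sum_j m_j^2)^{s}\big)^{1/2}$, controlled by $\|f\|_{H^{s+d/2}}$ through the same univariate comparison as above, times the deterministic tail $\big(\sum_{\|\pmb m\|_\infty>N}\prod_j(1+m_j)^{2\alpha+1}(1+\sum_j m_j^2)^{-s}\big)^{1/2}$. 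Comparing the latter with the radial integral $\int_N^\infty r^{d(2\alpha+1)+d-1-2s}\,dr$ shows that it converges precisely when $s>d(\alpha+1)$ and that its value is of the order $\frac{d}{s-d(\alpha+1)}N^{-s+d(\alpha+1)}$, the denominator coming from the tail integral and the factor $d$ from the coordinate faces of the shell $\{\|\pmb m\|_\infty>N\}$. This is \eqref{Eq3.21}.

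I expect the main obstacle to be the rigorous passage between the two orthogonal systems: the Sobolev regularity in \eqref{Eq3.19} is measured through the trigonometric Fourier coefficients of $f$, whereas the errors are expressed through the Jacobi coefficients $C_{\pmb m}$. Making the univariate decay estimate sharp in the weighted space $L^2(I,\omega_\alpha)$ while controlling the endpoint behaviour of the shifted weight $\omega_{\alpha+r}$, and then carrying all constants through the tensorisation so as to land exactly on the factor $\sqrt{d/(2s)}$ in \eqref{Eq3.20} and on the exponent $-s+d(\alpha+1)$ with constant $\frac{d}{s-d(\alpha+1)}$ in \eqref{Eq3.21}, is the delicate part; the remaining steps are routine bookkeeping of geometric and integral tail sums.
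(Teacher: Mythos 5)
Your route --- tensorize $\pi_N$ into univariate factors and estimate each univariate Jacobi tail by Rodrigues-formula integration by parts --- is genuinely different from the paper's, which never touches derivatives: the paper expands $f$ in the trigonometric system, computes the connection coefficients $d_{\pmb k,\pmb m}=\langle\Phi^\alpha_{\pmb m},e^{2i\pi\pmb k\cdot}\rangle_\alpha$ in closed form via the Bessel-function identity \eqref{Eq4.1}, splits $C_{\pmb m}=S_1+S_2$ according to whether $\|\pmb k\|_\infty\leq\|\pmb m\|_\infty/(e\pi)$ or not, and arrives at the pointwise bound $|C_{\pmb m}|\lesssim_{\alpha,d}\|\pmb m\|_\infty^{-s-d/2}\,(\|f\|_2+\|f\|_{H^s})$, which is then summed. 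The difference is not cosmetic, because the space $H^{s+d/2}(I^d)$ in the theorem is \emph{defined} in \eqref{Eq3.19} through decay of trigonometric Fourier coefficients, which is exactly the currency the paper's argument trades in. Your univariate lemma is stated in terms of $\|g\|_{H^s(I)}$ measured by derivatives; you yourself flag the passage between the two orthogonal systems as ``the main obstacle'' and then leave it unresolved, but that passage \emph{is} the proof: without it your slicewise estimate is never connected to the hypothesis $f\in H^{s+d/2}(I^d)$ (and for non-integer $s$ your ``$r\le s$ integrations by parts'' additionally needs an interpolation step you do not supply). A second symptom of the same gap is your reading of the hypothesis $\frac{N}{\log N}\geq\frac{1}{\log 2}\big(s+d+\frac12\big)$. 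In the paper it has one precise job: it forces $\|\pmb m\|_\infty^{(1+d)/2}\,2^{-\|\pmb m\|_\infty}\leq\|\pmb m\|_\infty^{-s-d/2}$ for $\|\pmb m\|_\infty\geq N$, i.e.\ it absorbs the low-frequency contribution $S_1$ coming from the exponentially small connection coefficients. In your argument it does nothing identifiable; attributing it to ``Gamma-function ratios'' indicates the estimate was not actually carried out.

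The uniform bound \eqref{Eq3.21} also does not close as you have set it up. With the decay $|C_{\pmb m}|\lesssim_{\alpha,d}\|\pmb m\|_\infty^{-s-d/2}$ that the regularity actually delivers, your first Cauchy--Schwarz factor $\sum_{\pmb m}|C_{\pmb m}|^2\big(1+\sum_j m_j^2\big)^{s}$ behaves like $\sum_n n^{d-1}\cdot n^{-2s-d}\cdot n^{2s}=\sum_n n^{-1}$, which diverges; raising the weight to $s+d/2$ to restore convergence changes the second factor and yields the exponent $-s+d(\alpha+\frac12)$ rather than the stated $-s+d(\alpha+1)$, and in either case the finiteness of the first factor rests on an unproved (and in general delicate) equivalence between weighted sums of Jacobi coefficients and the Sobolev norm of \eqref{Eq3.19}. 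The paper avoids the split entirely: it sums
\begin{equation*}
\sum_{\|\pmb m\|_\infty=n}|C_{\pmb m}|\,\|\Phi^\alpha_{\pmb m}\|_\infty\;\lesssim_{\alpha,d}\; d\,(n+1)^{d-1}\cdot n^{-s-d/2}\cdot n^{d(\alpha+1/2)}
\end{equation*}
in $\ell^1$ over $n>N$, which converges precisely when $s>d(\alpha+1)$ and gives \eqref{Eq3.21} directly. To repair your proposal you would need either to establish the univariate Jacobi-versus-trigonometric coefficient comparison (which is what the paper's $d_{\pmb k,\pmb m}$ computation does), or to prove the derivative-based univariate tail bound \emph{and} its compatibility with the Fourier-coefficient definition of $H^{s+d/2}(I^d)$; as written, both halves of the theorem rest on asserted but unestablished bridges.
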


\noindent
{\bf Proof:} Since the family of multivariate trigonometric exponentials  $\{e^{2i\pi \pmb k\cdot \pmb x},\, \pmb k\in \mathbb Z^d\}$ is an orthonormal basis of $L^2(I^d),$ then we have 
$$ f(\pmb x)= \sum_{\pmb k\in \mathbb Z^d} a_{\pmb k}(f) e^{2i\pi \pmb k\cdot \pmb x},\quad x\in I^d,\quad  a_{\pmb k}(f)=<f(\cdot), e^{-2i\pi \pmb k\cdot}>.$$ It is not hard to see that if $\pmb m\in \mathbb N_0^d,$ and 
$C_{\pmb m}(f)=<f,\Phi_{\pmb m}^\alpha>_\alpha,$ then for  $f\in H^s(I^d)$ with $s>d$ we have
$$C_{\pmb m}(f)=\sum_{\pmb k\in \mathbb Z^d} d_{\pmb k,\pmb m}  a_{\pmb k}(f),\quad d_{\pmb k,\pmb m}= <\Phi_{\pmb m}^\alpha(\pmb x),e^{2i\pi \pmb k\cdot \pmb x}>_\alpha.$$
Taking into account that $$ \Phi_{\pmb m}^\alpha(\pmb x)= \prod_{j=1}^d \wJ_{m_j}(x_j),\quad \pmb x=(x_1,\ldots,x_d),\quad \pmb m=(m_1,\ldots,m_d)\in \mathbb N_0^d,$$ one gets 
\begin{equation}
\label{Eq4.0}
d_{\pmb k,\pmb m}= \prod_{j=1}^d < e^{2i\pi k_j x_j},\wJ_{m_j}(x_j)>_\alpha= \prod_{j=1}^d d_{k_j,m_j},\quad d_{k_j,m_j}=\int_I e^{2i\pi k_j x_j}\wJ_{m_j}(x_j) \omega_{\alpha}(x_j)\, dx_j.
\end{equation}
On the other hand, it is known that, see for example \cite{NIST}
\begin{equation}\label{Eq4.1}
 \int_{-1}^{1} e^{ixy} \overline{P_m}^{(\alpha , \alpha)}(y) \overline{\omega}_{\alpha}(y)\,  dy= i^m \sqrt{\pi}\sqrt{2m+2\alpha+1} 
\sqrt{\frac{\Gamma(m+2\alpha+1)}{\Gamma(m+1)}}\frac{J_{m+\alpha+1/2}(x)}{x^{\alpha+1/2}},\quad x\in \mathbb R.
 \end{equation}
 Here, the $\overline{P_m}^{(\alpha , \alpha)}$ are the orthonormal Jacobi on $[-1,1],$ with $\overline{\omega}_{\alpha}(y)=(1-y^2)^\alpha,$   $J_{a}$ is the Bessel function of the first kind and order $a>-1$
 and $\Gamma(x)$ is the usual Gamma function.  The Gamma and Bessel functions  $\Gamma(\cdot)$ and $J_a(\cdot)$  satisfy the following useful inequalities that can be found in the literature,
 \begin{equation}\label{Eq4.2}
 \sqrt{2e} \left(\frac{x+1/2}{e}\right)^{x+1/2}\leq \Gamma(x+1)\leq \sqrt{2\pi} \left(\frac{x+1/2}{e}\right)^{x+1/2},\quad x>0. 
 \end{equation}
 and
  \begin{equation}\label{Eq4.3}
  |J_\mu(x)| \leq \frac{|x|^\mu}{2^\mu \Gamma(\mu+1)},\quad \mu>-1,\quad x\in \mathbb R.
 \end{equation}
 By using \eqref{Eq4.1} and \eqref{Eq4.2}, one gets 
\begin{equation}\label{Eq4.4}
d_{k_j,m_j}= \frac{(-1)^{k_j}}{2} i^{m_j}\sqrt{\pi} \sqrt{2m_j+2\alpha+1}\sqrt{\frac{\Gamma(m_j+2\alpha+1)}{\Gamma(m_j+1)}} \frac{J_{m_j+\alpha+1/2}(\pi k_j)}{(\pi k_j)^{\alpha+1/2}}.
\end{equation} 
Note that since the function ${\displaystyle x\rightarrow \frac{J_{m+\alpha}(x)}{x^{\alpha}}}$ has same parity as $m,$ then one can only consider the case $k_j\geq 0$ in \eqref{Eq4.4}. The value of $d_{k_j,m_j}$ for $k_j<0$ is simply given by $d_{k_j,m_j}=(-1)^{m_j} d_{-k_j,m_j}.$ Hence, by using \eqref{Eq4.1}, \eqref{Eq4.2} and \eqref{Eq4.3} together with some straightforward computations, one gets the useful inequality 
\begin{equation}
\label{Eq4.5}|d_{k_j,m_j}| \lesssim_{\alpha,d} \sqrt{m_j} \left(\frac{e\pi |k_j|}{2 m_j+2\alpha}\right)^{m_j},\quad m_j\geq 1.
\end{equation}
Moreover, from Cauchy-Schwarz inequality, we also have $|d_{k_j,m_j}|\leq 1$
for any integers $k_j, m_j.$ Consequently, if 
${\displaystyle \|\pmb k\|_\infty \leq \frac{\|\pmb m\|_\infty}{e \pi},}$
then one concludes that 
\begin{equation}
\label{Eq4.6}
|d_{\pmb k,\pmb m}| \lesssim_{\alpha,d} \sqrt{\|\pmb m\|_\infty}\,\,  2^{-\|\pmb m\|_\infty},\quad \forall\, \, \|\pmb k\|_\infty \leq \frac{\|\pmb m\|_\infty}{e \pi}.
\end{equation}
Next, we write the multivariate Jacobi coefficient expansion of $f\in H^s(I^d),$ as follows
\begin{equation}\label{Eq4.7}
C_{\pmb m}(f)=\sum_{ \|\pmb k\|_\infty \leq \|\pmb m\|_\infty/{e \pi}} d_{\pmb k,\pmb m}  a_{\pmb k}(f)+\sum_{\pmb  \|\pmb k\|_\infty > {\|\pmb m\|_\infty}/{e \pi}} d_{\pmb k,\pmb m}  a_{\pmb k}(f)=S_1+S_2,
\end{equation}
where, $a_{\pmb k}(f)=<f(\cdot), e^{-2i\pi \pmb k\cdot}>.$ To bound $|S_1|,$ we first note that in $\mathbb Z^d,$ there exist at most ${\displaystyle \left[\frac{2}{e\pi}\|\pmb m\|_\infty+1\right]^d}$ different $\pmb k$ satisfying $\|\pmb k\|_\infty \leq \|\pmb m\|_\infty/{e \pi},$ where $\left[x\right]$ denotes the integer part of $x$. Moreover, from Bessel's inequality, we have 
${\displaystyle \sum_{ \|\pmb k\|_\infty \leq \|\pmb m\|_\infty/{e \pi}} |a_{\pmb k}(f)|^2\leq \|f\|_2^2}.$ Consequently by using \eqref{Eq4.6} and Cauchy-Schwarz inequality, one concludes that 
\begin{equation}\label{Eq4.8}
|S_1| \lesssim_{\alpha,d} \|\pmb m\|_\infty^{\frac{1+d}{2}}\,\,  2^{-\|\pmb m\|_\infty} \|f\|_2. 
\end{equation}
On the other hand, since $f\in H^{s+d/2}(I^d)$ and since by Bessel's inequality, we have 
$$ \sum_{\pmb  \|\pmb k\|_\infty > {\|\pmb m\|_\infty}/{e \pi}} |d_{\pmb k,\pmb m}|^2 \leq \|\Phi^\alpha_{\pmb m}\|^2_\alpha=1,$$ then a simple Cauchy-Schwarz inequality gives us
\begin{eqnarray}\label{Eq4.9}
|S_2|^2 &\leq& \sum_{\|\pmb k\|_\infty >\|\pmb m\|_\infty/{e \pi}}  |a_{\pmb k}(f)|^2 \leq \Big(\frac{e\pi}{\|\pmb m\|_\infty}\Big)^{2s+d} \cdot \sum_{ \|\pmb k\|_\infty > \|\pmb m\|_\infty/{e \pi}} \Big(1+\sum_{j=1}^d |k_j|^2\Big)^{s+d/2}  |a_{\pmb k}(f)|^2 \nonumber \\
&\leq & \|\pmb m\|_\infty^{-2s} (e\pi)^{2s} \|f\|^2_{H^s}.
\end{eqnarray}
By combining \eqref{Eq4.8} and \eqref{Eq4.9}, one concludes that 
\begin{equation}
\label{Eq4.10}
|C_{\pmb m}| \lesssim_{\alpha,d} \Big(\|\pmb m\|_\infty^{\frac{1+d}{2}}\,\,  2^{-\|\pmb m\|_\infty}+ \|\pmb m\|_\infty^{-s+d/2}\Big) (\|f\|_2+\|f\|_{H^s}).
\end{equation}
Note that $\|\pmb m\|_\infty^{\frac{1+d}{2}}\,\,  2^{-\|\pmb m\|_\infty} \leq \|\pmb m\|_\infty^{-s-d/2},$ whenever $\|\pmb m\|_\infty\geq N,$ with 
${\displaystyle \frac{N}{\log N} \geq \frac{1}{\log 2} \big(s+d+\frac{1}{2}\big).}$ Hence, in this case, \eqref{Eq4.10} is simply written as 
\begin{equation}
\label{Eq4.11}
|C_{\pmb m}| \lesssim_{\alpha,d} \|\pmb m\|_\infty^{-s-d/2} (\|f\|_2+\|f\|_{H^s}).
\end{equation}
Next, since 
\begin{equation}
\label{Eq4.12}
 f(x)-\pi_N f(x)=\sum_{\pmb m\in \mathbb Z^d,\, \|\pmb m\|_\infty \geq N+1} C_{\pmb m} \Phi^\alpha_{\pmb m}(\pmb x)=\sum_{n=N+1}^\infty \sum_{\|\pmb m\|_\infty=n}C_{\pmb m} \Phi^\alpha_{\pmb m}(\pmb x).
\end{equation}
Again, since there exist $ d (n+1)^{d-1}$ $d-$tuples $\pmb m\in \mathbb N^d$ with $\|\pmb m\|_\infty=n,$ then by using \eqref{Eq4.11} and by Parseval's equality applied to \eqref{Eq4.12} (which is due to the  orthonormality of the $\Phi^\alpha_{\pmb m}$ in $L^2(I^,\pmb \omega_{\alpha})$) , one gets 
\begin{eqnarray}
\label{Eq4.13}
\| f-\pi_N f\|_\alpha^2 &\lesssim_{\alpha,d}& d \Big(\sum_{n=N+1}^\infty (n+1)^{d-1} n^{-2s-d} \Big) (\|f\|_2+\|f\|_{H^s})^2 \nonumber\\
&\lesssim_{\alpha,d}&\frac{d}{2s}\,   N^{-2s}   (\|f\|_2+\|f\|_{H^s})^2.
\end{eqnarray}
This concludes the proof of inequality \eqref{Eq3.20}. 
Finally to prove \eqref{Eq3.21}, we recall the following known upper bound
for the Jacobi polynomials $\wJ_m$ with  $\alpha\geq -\frac{1}{2},$ see for example
$$ \sup_{x\in I} |\wJ_m(x)| \leq M_\alpha m^{\alpha+\frac{1}{2}},$$
for some constant $M_\alpha.$ Consequently, we have 
$$\sup_{\pmb x\in I^d} |\Phi^\alpha_{\pmb m}(\pmb x)| \lesssim_{\alpha,d} \|\pmb m\|_{\infty}^{d(\alpha+1/2)}.$$
Hence, by using the previous technique we have used to bound $\| f-\pi_N f\|_\alpha,$ one gets 

\begin{eqnarray*}
\sup_{\pmb x\in I^d} \| f(\pmb x)-\pi_N f(\pmb x)\|& \lesssim_{\alpha,d}&
d \sum_{n=N+1}^\infty (n+1)^{d-1} n^{-s-d/2} n^{d(\alpha+1/2)} \\
&\lesssim_{\alpha,d}& N^{-s+d(\alpha+1)} \frac{d}{s-d(\alpha+1)}.
\end{eqnarray*}
This concludes the proof of the Theorem.

As a consequence of the previous two theorems, we have the following corollary that provides us with a convergence rate for our truncated estimator $\widehat F_N,$ when the regression function belongs to an isotropic Sobolev space.

\begin{corollary} Let $s>0$ and let $\alpha\geq -\frac{1}{2}$ be such 
${\displaystyle s> d\Big(\alpha+\frac{1}{2}\Big).}$ Assume that the regression function $f$ belongs to 
an isotropic Sobolev space $H^{s+d/2}(I^d),$ then the convergence rate of the estimator $\widehat F_N$ is of order ${\displaystyle O\Big(n^{-2s/(2s+d)}\Big).}$
\end{corollary}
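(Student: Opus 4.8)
The plan is to combine the $L_2$-risk estimate \eqref{L2risk} with the approximation bound \eqref{Eq3.20}, and then optimize the truncation level $N$ as a function of the sample size $n$. First I would substitute $\|f-\pi_N f\|_\alpha^2 \lesssim_{\alpha,d} \frac{d}{2s}\, N^{-2s}$ (valid for $f\in H^{s+d/2}(I^d)$ once $N$ is large enough to meet the constraint $\frac{N}{\log N}\geq \frac{1}{\log 2}(s+d+\frac12)$ of Theorem \ref{Thm3}) into \eqref{L2risk}. Fixing $\delta=\frac12$ and treating $\sigma$, $M_f$, $\eta_\alpha$, $B_\alpha$ as constants absorbed into $\lesssim_{\alpha,d}$, the bound exhibits four contributions: a variance term of order $N^d/n$, the bias term of order $N^{-2s}$, a mixed term of order $N^{2d(\alpha+1)-2s}/n$, and a term that is exponentially small in $n/\big((N+1)^{2\alpha+2}\big)^d$.

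The heart of the argument is the standard bias--variance balance. I would set $N=N(n)=\lfloor n^{1/(2s+d)}\rfloor$, so that the variance term $N^d/n$ and the bias term $N^{-2s}$ are both of order $n^{-2s/(2s+d)}$. Since $N(n)\to\infty$ as $n\to\infty$, the growth constraint of Theorem \ref{Thm3} is met for all sufficiently large $n$, so the approximation bound indeed applies.

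It then remains to verify that the remaining two terms do not spoil the rate, and this is exactly where the hypothesis $s>d(\alpha+\frac12)$ enters. For the mixed term, with $N\sim n^{1/(2s+d)}$ its exponent of $n$ equals $\frac{2d(\alpha+1)-2s}{2s+d}-1=\frac{2d\alpha+d-4s}{2s+d}$, which is $\leq -\frac{2s}{2s+d}$ precisely when $d(2\alpha+1)\leq 2s$, i.e. under our hypothesis; hence this term is also $O\big(n^{-2s/(2s+d)}\big)$. For the exponential term, the argument of the exponential behaves like $-c\,n^{1-2d(\alpha+1)/(2s+d)}=-c\,n^{(2s-d(2\alpha+1))/(2s+d)}$, and the strict inequality $s>d(\alpha+\frac12)$ makes the exponent of $n$ strictly positive, so this term decays faster than any power of $n$ and is negligible against the polynomial rate even after accounting for its $(N+1)^d$ prefactor.

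The main obstacle I anticipate is not the bias--variance balancing itself, which is routine, but the careful bookkeeping of the three parameters $N,\alpha,d$ in the mixed and exponential terms: one must track the exact exponent $2d(\alpha+1)-2s$ in order to see that the single hypothesis $s>d(\alpha+\frac12)$ simultaneously controls both of them. Collecting the four bounds yields $\mathbb E\big[\|f-\widehat F_N\|_\alpha^2\big]\lesssim_{\alpha,d} n^{-2s/(2s+d)}$, which is the claimed convergence rate.
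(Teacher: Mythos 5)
Your proposal is correct and follows essentially the same route as the paper: substitute the approximation bound \eqref{Eq3.20} into the $L_2$-risk estimate \eqref{L2risk}, balance the variance term $N^d/n$ against the bias term $N^{-2s}$ by taking $N\sim n^{1/(2s+d)}$, and use the hypothesis $s>d\big(\alpha+\tfrac12\big)$ to check that the mixed term (exponent $2d(\alpha+1)-2s$) and the exponential term do not degrade the rate. Your version merely spells out the exponent bookkeeping that the paper compresses into ``straightforward computations.''
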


\begin{proof} Straightforward computations show that if ${\displaystyle s> d\Big(\alpha+\frac{1}{2}\Big),}$ then the first and the third quantity in the sum of the left hand side of \eqref{L2risk} are of order 
${\displaystyle O\Big(\frac{(N+1)^d}{n}\Big).}$ Hence, by using \eqref{Eq3.20} and \eqref{L2risk},  the fastest rate of convergence of the estimator $\widehat F_N$ is obtained when $N=O\big(n^{1/(2s+d)}\big).$
In this case, we have 
$$\mathbb E\Big[\| f-\widehat F_N \|_\alpha^2\Big]=O\Big(n^{-2s/(2s+d)}\Big).$$
\end{proof}

\begin{remark} It is interesting to note that the previous convergence rate of our estimator $\widehat F_N$ is too similar to the theoretical optimal convergence rate of  min-max nonparametric estimator. This last optimal convergence rate is given in \cite{Stone}, see also \cite{Bauer}. It is given in the case where the regression function $f$ belongs to the class $C-p$ functions, that is the set of $d-$variate functions of class $C^p,$ with their different $p-$th order partial derivatives being H\"older continuous. In this case, the optimal rate of convergence is of 
$ O\Big(n^{-2p/(2p+d)}\Big)$, as for kernel regression estimate.
\end{remark}

\section{Computational analysis}

In this section, we check the performance of our estimator  $\widehat f^\alpha_N(\cdot),$ by applying it to synthetic data as well as to real data.

\subsection{Numerical simulations on synthetic data}

In this paragraph, we  give three numerical examples that illustrate the results of this work. The first example is an illustration of the first main Theorem \ref{Mainthm1}, while the other two  examples illustrate the performance of our estimator when applied to synthetic data.\\

\noindent
{\bf Example 1:}  In this first example, we check numerically the important result given by Theorem \ref{Mainthm1}. To this end, we have considered the dimension $d=2,$ then we have computed an average for the true condition number $\kappa_2 (G^\alpha_{d,N})$ over  10 realizations and for different values of $\alpha=-0.5,\, 0.0,$ $N=5, 10$ and $n=n_1^2,\, n_1=60,80,100.$ The obtained numerical results are given by Table \ref{tableau1} and they are fairly coherent with the theoretical result of Theorem \ref{Mainthm1}. As it is stated by formula \eqref{kkappa2G} of Theorem \ref{Mainthm1}, the value of $\alpha=-\frac{1}{2}$ seems to be the optimal value that provides the smallest condition number for the random matrix $G^\alpha_{d,N}.$
Moreover, for a fixed value of the parameter $\alpha,$ a smaller  value of $N$
and/or a larger  value of $n,$ give us a smaller condition number 
$\kappa_2 (G^\alpha_{d,N}).$

\begin{center}
\begin{table}[h]
\vskip 0.2cm\hspace*{4.0cm}
\begin{tabular}{ccccccc} \hline
 $\alpha$   &$N$&$n_1$&$\kappa_2 (G^\alpha_{d,N})$        &   $N$  &$n_1$&$\kappa_2 (G^\alpha_{d,N})$\\   \hline
 $-0.5$& $5$ & $60$      &$ 7.45  $ & $10$ & $ 60$& $35.41  $\\
 $$    & $-$ & $80$      &$ 5.62  $ & $-$ & $ 80$& $19.97  $\\
 $$    & $-$ & $100$     &$ 3.94  $ & $-$ & $ 100$& $11.41  $\\
 $0.0$& $5$ & $60$       &$13.51   $ & $10$ & $ 60$& $6596.05  $\\
  $$  & $-$ & $80$      &$10.17   $ & $-$ & $ 80$& $369.43  $\\
  $$  & $-$ & $100$     &$7.57   $ & $-$ & $ 100$& $158.53  $\\

  \\ \hline
\end{tabular}
\caption{Illustration of Theroem \ref{Mainthm1}'s results with $n=n_1^2.$}
\label{tableau1}
\end{table}
\end{center}

\noindent
{\bf Example 2:} In this second example, we illustrate  the performance of our estimator $\widehat f^\alpha_N(\cdot),$ given by \eqref{Eq1.3}--\eqref{matrixG},
by applying it to data, generated by a   synthetic $2-$variate regression function, given by 
$$ f(u,v)=1+2 u-4 v+3 v^2-2 u v+3 u v^2-v^3+u^4+2 u^5+\sin(2 \pi u)-\cos(3 \pi v),\quad u,v \in [0,1].$$ 
For this purpose, we have considered the special case of $\alpha=-\frac{1}{2},$ $N=5, 10$ and $n=n_1^2,\, n_1=60,\, 80,\,  100.$ Then we considered the regression problem (\ref{model1}) with i.i.d. white gaussian noises associated to the standard deviation $\sigma=0.05$ and $\sigma=0.15.$ To assess the performance of the regression estimate, we have computed the mean squared error (MSE), which is given by 
$$ MSE = \frac{1}{n} \sum_{i=1}^n \left(f(\pmb X_i)-\widehat f^\alpha_N(\pmb X_i)\right)^2.$$
Here, the $\pmb X_i$ are i.i.d. bi-variate random sampling points with each of the two components following a Beta distribution on $[0,1]$ and associated with the parameter $\alpha=-0.5.$  From these numerical results and as it has been stated by Theorem \ref{Thm3}, for fixed values of the parameters $\alpha,\sigma, n$ the largest the value of $N,$ the smallest is the associated MSE. Also, it is interesting to note that according to Theorem \ref{Mainthm1}, our estimator $\widehat f^\alpha_N(\cdot)$ is surprisingly stable in the sense that it behaves well in the presence of data perturbation by a white noise with reasonable variance. It is also more less time consuming than the kernel regression estimate (cf  Table \ref{tableau2}).\\  

\medskip

\begin{table}[h]
\vskip 0.2cm\hspace*{1.0cm}
\begin{tabular}{ccccccccccc} \hline
 $N$   &$\sigma$&$n_1$&$OM$  &$Kernel$      &   $\sigma$  &$n_1$&$OM$ &$Kernel$  &$TOM$ & $TK$\\   \hline
 $10$& $0.05$ & $60$      &$2.62e-3   $  &$1.4e-3   $ & $0.15$ & $ 60$& $2.64e-2  $ & $\mathbf{6.0e-3}  $  &1.55 & 1.10\\
     & $-$ & $80$      &$2.60e-3 $        & $3.0e-3 $ & $-$ & $ 80$& $\mathbf{2.30e-2}$ & $2.4e-2$ & $2.51$ & 4.33\\
     & $-$ & $100$     &$2.44e-3  $ &$2.6e-3  $& $-$ & $ 100$& $\mathbf{2.25e-2} $ & $2.3e-2 $ &3.52 & 10.25\\
 $5$& $0.05$ & $60$      &$4.67e-3 $  &$1.4e-3 $ & $0.15$ & $ 60$& $2.47e-2  $ & $\mathbf{6.0e-3}  $ & 0.18 & 1.10\\
    & $-$ & $80$      &$6.40e-3  $ &$3.0e-3  $& $-$ & $ 80$& $2.70e-2  $ & $\mathbf{2.4e-2}  $ & 0.29 & 4.33\\
    & $-$ & $100$     &$5.11e-3   $ &$2.6e-3   $& $-$ & $ 100$& $2.66e-2  $ & $\mathbf{2.3e-2 } $  & 0.38 & 10.25\\

  \\ \hline
\end{tabular}
\caption{The mean squared error for  the proposed method and the nonparametric kernel method on the considered case of Example 2 with $n=n_1^2.$ Kernel denotes the mean squared errors of the kernel regression estimate while $OM$ is for the proposed method.  The best mean squared errors are in bold. The last two columns give the computing time (in minutes) of the proposed  (TOM) and for the kernel (TK) methods.}
\label{tableau2}
\end{table}
Moreover, we have pushed forward the previous numerical simulations associated to the previous synthetic example by comparing our proposed regression estimator to  other well known parametric and non parametric regression estimators. These estimators are given by Kernel (the kernel regression with optimal bandwidth selection by cross-validation); SVM (support vector machine) and RF (the Random Forest regression estimator).
For this second set of simulations, we have used $80 \%$ of the sample size $n=n_1^2$ for the construction of the estimators and the remaining $20 \%$ of the sample size are used to validate the estimators by computing  the resulting mean squared, the mean absolute errors as well as the coefficient of adjustment $R^2$. The obtained numerical results are given in Table 3. These results indicate that for larger $N$, sample size $n$ and error variance, the proposed method is competitive to the kernel method, which outperforms. In fact, our method is less time consuming than the kernel method as shown in Table \ref{tableau2} and outperforms compare to random forest and support vector machine methods.\\

\begin{table}[h]
		{\footnotesize{
	\begin{tabular}{llllll lll lll lll lll}
		\hline
		&     &       & \multicolumn{3}{l}{OM}& \multicolumn{3}{l}{Kernel}  & \multicolumn{3}{l}{SVM} & \multicolumn{3}{l}{RF}\\ \hline
		$N$                          & $\sigma$   & $n_1$    & $MSE$   &$R^2$  & $MAE$  \vline& $MSE$   &$R^2$  & $MAE$ \vline& $MSE$   &$R^2$  & $MAE$ \vline& $MSE$   &$R^2$  & $MAE$\\ \hline
		\multirow{3}{*}{5}
		& .05     &60  &1.36e-2 &9.96e-1	&8.31e-2	&	2.86e-3&	9.99e-1 & 4.3e-2	&	6.74e-2&	9.98e-1&	5.28e-2  &	3.92e-1&	9.43e-1&	3.12e-1	\\\
		&  $-$    &80  &8.35e-3 &9.96e-1	&7.31e-2	&	2.93e-3&	9.99e-1 & 4.3e-2	&	6.67e-2&	9.99e-1&	5.14e-2  &	3.92e-1&	9.39e-1&	3.10e-1\ \\
		& $-$     &100&6.39e-3 &9.98e-1	&6.44e-2	&	2.68e-3&	9.99e-1 & 4.1e-2	&	6.07e-2&	9.98e-1&	4.79e-2  &	3.76e-1&	9.47e-1&	3.11e-1 \\\\
		\multirow{3}{*}{10}
		& .05     &60  &5.35e-3 &9.98e-1&5.61e-2	&	2.86e-3&	9.99e-1 & 4.3e-2	&	6.74e-2&	9.98e-1&	5.28e-2  &	3.92e-1&	9.43e-1&	3.12e-1	\\\
		&  $-$  & 80&5.45e-3 &9.98e-1	&5.46e-2	&	2.93e-3&	9.99e-1 & 4.3e-2	&	6.67e-2&	9.99e-1&	5.14e-2  &	3.92e-1&	9.39e-1&	3.10e-1\ \\
		& $-$    & 100 &4.16e-3 &9.98e-1	&4.80e-2	&	2.68e-3&	9.99e-1 & 4.1e-2	&	6.07e-2&	9.98e-1&	4.79e-2  &	3.76e-1&	9.47e-1&	3.11e-1 \\\\
		\multirow{3}{*}{5}
		& .15     &60  &3.35e-2 &9.98e-1	&1.46e-1	&	2.32e-2&	9.99e-1 & 1.2e-1	&	1.67e-1&	9.98e-1&	1.35e-1  &	5.1e-1&	9.1e-1&	4.14e-1	\\\
		&  $-$  & 80&2.97e-2 &9.99e-1	&1.36e-1	&	2.46e-2&	9.99e-1 & 1.2e-1	&	1.71e-1&	9.87e-1&	1.39e-1  &	4.11e-1&	9.34e-1&	3.27e-1\ \\
		& $-$    & 100 &2.73e-2 &9.99e-1	&1.31e-1	&	2.32e-2&	9.99e-1 & 1.2e-1	&	1.69e-1&	9.99e-1&	1.34e-1  &	4.02e-1&	9.39e-1&	3.30e-1 \\\\
		\multirow{3}{*}{10}
		& .15     &60  &3.08e-2 &9.99e-1	&1.39e-2	&	2.32e-2&	9.99e-1 & 1.2e-1	&	1.67e-1&	9.98e-1&	1.35e-1  &	5.1e-1&	9.1e-1&	4.14e-1	\\\
		
		&  $-$  & 80&2.97e-2 &9.99e-1	&1.28e-1	&	2.46e-2&	9.99e-1 & 1.2e-1	&	1.71e-1&	9.87e-1&	1.39e-1  &	4.11e-1&	9.34e-1&	3.27e-1\ \\
		& $-$    & 100 &2.73e-2 &9.99e-1&1.24e-1	&	2.32e-2&	9.99e-1 & 1.2e-1	&	1.69e-1&	9.99e-1&	1.34e-1  &	4.02e-1&	9.39e-1&	3.30e-1 \\
		\\
		\hline
	\end{tabular}}}
		\centering
		\caption{Validation of the regression model estimation on a testing sample of size $80\%$ of the sample of size $n=n_1^2$ given in Example 2. The mean squared error (MSE), mean absolute error (MAE) and $R^2$ for the proposed method (OM), the nonparametric kernel (Kernel), Support Vector Machine (SVM) and Random Forest (RF) methods are given.}
		\label{pagre}
\end{table}

\noindent
{\bf Example 3:} In this  example, we illustrate  the performance of our estimator 
by applying it to a classification problem, generated by the   synthetic $2-$variate sample $(X_i,Y_i)_{i=1}^n,\, n=n_1^2$, $\mathbf X_i=(X_{i,1},X_{i,2})^\top$ of Example 2. The classification data is generated by the rule   $Y_i=1$ if $f(X_{i,1},X_{i,2})>c$, $Y_i=0$ otherwise, $c$ is the taken as the mean of the $f(X_{i,1},X_{i,2})$. We consider as in Example 2, a gaussian white noise associated to the two values of  $\sigma=0.05$ and $\sigma=0.15.$ Then, we have constructed our estimator $\widehat f^\alpha_N(\cdot),$  with $\alpha=-0.5$ $N=5, 10$ by using $80\%$ of the samples data with size $n=n_1^2$ for the different values of $n_1=60, 80, 100.$ The remaining  $20\%$ of the data are used for testing the classification
performance. Note that we have used the standard classification rule $\widehat Y_i=0$ if  $\widehat f^\alpha_N(X_i)\leq 0.5,$ otherwise $\widehat Y_i=1.$ Moreover, we have compared our proposed classifier (OM)  with other three estimators frequently used in the literature for classification purposes. These estimators are the LDA (Linear Discriminant Analysis), SVM (Support Vector Machine) and NN (the Neural Network based classifier). The obtained numerical results summarized in Table \ref{pagre2} show that proposed classification rule and neural networks method outperform the linear discriminant and support vector machines classifiers. As mentioned before,  the main advantages of the proposed method are its stability, convergence rate and fairly low computation time.\\

\begin{table}[h!]
		{\footnotesize{
	\begin{tabular}{llllll l l l l}
		\hline
		&     &       & \multicolumn{1}{l}{OM}& \multicolumn{1}{l}{LDA}  & \multicolumn{1}{l}{SVM} & \multicolumn{1}{l}{NN}\\ \hline
		$N$                          & $\sigma$   & $n_1$    & $CCR$   \vline& $CCR$  \vline& $CCR$   \vline& $CCR$  \\ \hline
		\multirow{1}{*}{5}
		& .05     &60  &95	&	65&		79&	\textbf{96}&	\\\
		&  $-$    &80  &95 	&	74&		82&	\textbf{97}&	\\\
		&  $-$    &100  &96 	&	73&		82&	\textbf{97}&	\\\
		\multirow{1}{*}{10}
		& .05     &60  &\textbf{99}	&	65&		79&	96&	\\\
		&  $-$    &80  &\textbf{97} 	&	74&		82&	97&	\\\
		&  $-$    &100  & \textbf{98}	&	73&		82&	97&	\\\		\multirow{1}{*}{5}
		& .15     &60   &93 &76&		84&	\textbf{95}&	\\\
		&  $-$    &80  &93&83&		90&	\textbf{97}&	\\\
		&  $-$    &100  &93 	&	72&		81&	\textbf{94}&	\\\
		\multirow{1}{*}{10}
		& .15     &60   &\textbf{95} &76&		84&	95&	\\\
		&  $-$    &80  &94&83&		90&	\textbf{97}&	\\\
		&  $-$    &100  &\textbf{95} 	&	72&		81&	94&	\\
			 \\ \hline
		\end{tabular}}}
			\centering
			\caption{Prediction results on a testing sample of size $80\%$ of the sample size of data generated using Example 3. The correct classification rate (CCR) in $\%$ for the proposed method (OM), Linear Discriminant Analysis (LDA), Support Vector Machine (SVM) and Neural Network (NN). The bold values highlight the best classification rates}
			\label{pagre2}
\end{table}


\subsection{Application to breast cancer cell lines regression and classification}




Most cancer patient die due to metastasis, and the early onset of this multi-step process is usually missed by current staging tumor modalities. Advanced techniques exist to enrich disseminated tumor cells from patient blood and bone marrow as cancer progression marker. However, these cells present high heterogeneity, only some of them can exhibit stem cell phenotype and tumor development potential, others can have plasticity potential to reprogram into cancer stem cells. So, detection and characterization are challenging due to lack of clear phenotypic markers. Therefore, there is a critical need to find new ways to anticipate and predict metastasis development at an early stage of patient care.
Cancer progression involves many cellular morphological effects, which have been revealed by biophysical studies. The relevance of the characterization of cancer cells by their bio-mechanical phenotype is attested by reports pointing out their physical alteration as reduced cell stiffness with invasiveness for lung, breast and colon cancers, while the deformability of circulating lymphocytes is reduced in the case of acute lymphoblastic leukemia. The physical properties even allow identifying different malignant breast epithelial cell lines by their deform-ability and their viscoelastic behavior.
Even though the analysis capability of cancer cells by their physical characteristics has been demonstrated, reports mostly compare different known states of cancer cells. So far to our knowledge, no prediction capability has been reported to detect cancer cells and evaluate their invasiveness only by bio-mechanical characterization. 
This application aims to use cell physical phenotyping to detect and categorize disseminating cells population by physical parameters (electrical measurements) using MEMS (Microelectromechanical systems) technology performing electrical single cell measurements.
 The MEMS devices \cite{ahmadian22,yamahataetl08,yamahataetl18} capture a cell for  stimulation and provides the information on the mechanical or electrical properties of the captured cell. They performed a compression protocol on each cell and measured changes in the resonance frequency and amplitude values for single-cell biophysical properties as a function of time in addition to the initial measurement on the cell dimension.\\
 The dataset analyzed here are derived from MEMS devices, they are composed of electrical properties (maximum values during the compression period, at 1 and 5 Mega hertz) of single cells from three different breast cancer cell lines in a controlled environment.
The compression assays on different breast cancer cell lines, U937, MCF7, SUM159-PT, give four electrical measurements (real and imaginary, at  1 and 5 Mega hertz, Figure \ref{fig2}). The size is obtained from 1 Mega hertz parameter (Figure \ref{fig2}). The three cell lines  has potential metastatic.
 The  SUM159-PT cell line has higher metastatic potential compared to the two other cell lines, SUM159-PT showed that cancer cells exhibit softer characteristics compared to their benign counterparts. The comparison of the average size (amplitude) between the cell lines (Figure  \ref{fig2}) indicated that SUM159-PT cell line (very aggressive and highly metastatic) was softer than MCF7  cell line (having lower metastatic potential). 
For the four electrical parameters and  size (amplitude) the  cell lines showed significant differences (Kruskal-Wallis comparison test have been done) between metastatic cell lines.\\

We apply the developed methodology to the cell dataset. To run our proposed estimator, we have first transformed (by usual dilation and translation techniques),  the set of  $4-$variate real data corresponding to the 1 and 5 Mega real and imaginary parameters values of the different cells into the square $[0,1]^2.$ Then, we have used these transformed data with total size almost equal to $3000,$ together with the standard Shepard scattered interpolation algorithm \eqref{Ineq2.3} with $p=3$ and   $n=n_1^2,$
for three couples $(N,n_1)= (5,30), (10,50), (15,70)$  and we have 
 computed fairly accurate numerical approximations of the values of cells at $n=n_1^2$ random i.i.d. sampling points in the $2-$dimensional unit square and following the two-dimensional Beta distribution with parameters $(\alpha+1,\alpha+1), \alpha=-0.5$.
First regression analysis has been done to explain the cell size (response variable) with the 5 Mega electrical parameters using the three cell types. We run the proposed model on $80 \%$ of the sample size for the construction of the estimators and the remaining $20 \%$ to validate the regression estimation by computing  the resulting mean squared and as well as the $R^2$. We compare our results with that of  the kernel, the support vector machine and the Random Forest regression estimations.
The results given in Table \ref{tableau3} show the same behavior as the results based on the simulated data. The proposed method is competitive compare to the above mentioned methods in particular the kernel method. However our method is less time consuming than the kernel regression estimate.
\begin{table}[h]
		{\footnotesize{
	\begin{tabular}{lll ll ll ll ll}
		\hline
		&            \multicolumn{2}{l}{OM}& \multicolumn{2}{l}{Kernel}  & \multicolumn{2}{l}{SVM} & \multicolumn{2}{l}{RF}\\ \hline
		$N$      & $MSE$   &$R^2$   \vline& $MSE$   &$R^2$  \vline& $MSE$   &$R^2$   \vline& $MSE$   &$R^2$  \\ \hline
		$5$      &5.97e-2 &9.80e-1 &	3.62e-2&	9.87e-1 	&	2.03e-1&	9.98e-1&		2.04e-1&	9.88e-1\\
		$10$     &4.95e-2 &9.83e-1		&	3.62e-2&	9.87e-1 	&	2.03e-1&	9.98e-1&		2.04e-1&	9.88e-1\\
	       $15$    &4.09e-2 &9.86e-1		&	3.62e-2&	9.87e-1 	&	2.03e-1&	9.98e-1&		2.04e-1&	9.88e-1\\
		\hline
	\end{tabular}}}
		\centering
		\caption{Validation of the cell size regression model estimation  on a test sample of size $20\%$ of the sample of size $n=2926$ cells using a training sample with the remaining $80\%$. The mean squared error (MSE) and $R^2$ on the test sample for the proposed method, the kernel, Support Vector Machine (SVM) and Random Forest (RF) methods.}
		\label{tableau3}
\end{table}\\
In the other hand, we run the proposed method (OM) and three other supervised learning methods to predict the population the cells belong to. These last methods are the  Linear Discriminant Analysis (LDA), Generalized Additive Models (GAM) and   Generalized Linear Models (GLM, logit).  For classification purpose, the cells of the classes MCF7 and U937 were assigned the integer values $1$ and $2,$ respectively. We have run  our proposed estimator
by using the same way as in the previous regression example with 
the couple $(N,n_1)=(5,30)$ and  transformed data with total size  almost
$1700.$
We have  computed our associated estimator $\widehat f^\alpha_N(\cdot),$ given by \eqref{Eq1.3}--\eqref{matrixG}.  The values are given in Table \ref{tableau4} where the overall classification accuracy (CR), precision (PR: the fraction of correct predictions for a certain class), recall (R: the fraction of instances of a class that were correctly predicted), the F1 (harmonic mean of precision and recall), are given on a test sample based on $20\%$ of the two different sample cells (MCF7 and U937) while the remaining  $80\%$ cells data are used for training. The results show that the proposed method has the second best overall correct classification rate compare to the best GAM model. When looking at the precision, the proposed method and GAM outperform when predicting the less metastatic cells (U937).\begin{table}[h]
	\centering
		{\footnotesize{
	\begin{tabular}{llll l l l l}
		\hline
		&      \multicolumn{1}{l}{OM}& \multicolumn{1}{l}{LDA}  & \multicolumn{1}{l}{GLM} & \multicolumn{1}{l}{GAM}\\ \hline
		$N$                       & $CR$   \vline& $CR$  \vline& $CR$   \vline& $CR$  \\ \hline
		\multirow{1}{*}{5}
		&   93  	&	89&		92.2&	94.8&	\\\
		\end{tabular}}}
		{\footnotesize{
	\begin{tabular}{llllll lll lll lll lll}
		\hline
		&     &       & \multicolumn{3}{l}{OM}& \multicolumn{3}{l}{LDA}  & \multicolumn{3}{l}{GLM} & \multicolumn{3}{l}{GAM}\\ \hline
		$N$                          &    &     & $PR$   &$R$  & $F1$  \vline& $PR$   &$R$  & $F1$ \vline& $PR$   &$R$  & $F1$ \vline& $PR$   &$R$  & $F1$\\ \hline
		\multirow{3}{*}{5}
		& MCF    &  &94 &94	&94	&	99 &86	&92	&	96&	92 & 94  &	96&	95&	96	\\\
		&  U937    &   &91 & 91	& 91	&	68 &97	&80	&	83&	93 & 87  &	90&	94&	92\ \\
		&      && &	&	&	&	& 	&	&	&	 &	&	&	 \\\hline
	\end{tabular}}}
		\centering
		\caption{Validation of the classification model estimation on a testing sample of size $20\%$ of the MCF7 and U937 cells. The overall classification accuracy (CR), precision (PR); the fraction of correct predictions for a certain class; the recall (R); fraction of instances of a class that were correctly predicted, the F1; harmonic mean of precision; for the proposed method, LDA, GAM and GLM}
		\label{tableau4}
\end{table}
  \\ 
The finite sample properties of this section shows that the proposed methodology is competitive to the well known parametric (GAM); nonparametric (kernel) and SVM  methods and less time consuming than the kernel regression and does not require any extra regularization or conditioning step.

\begin{figure}
\includegraphics[scale=0.45]{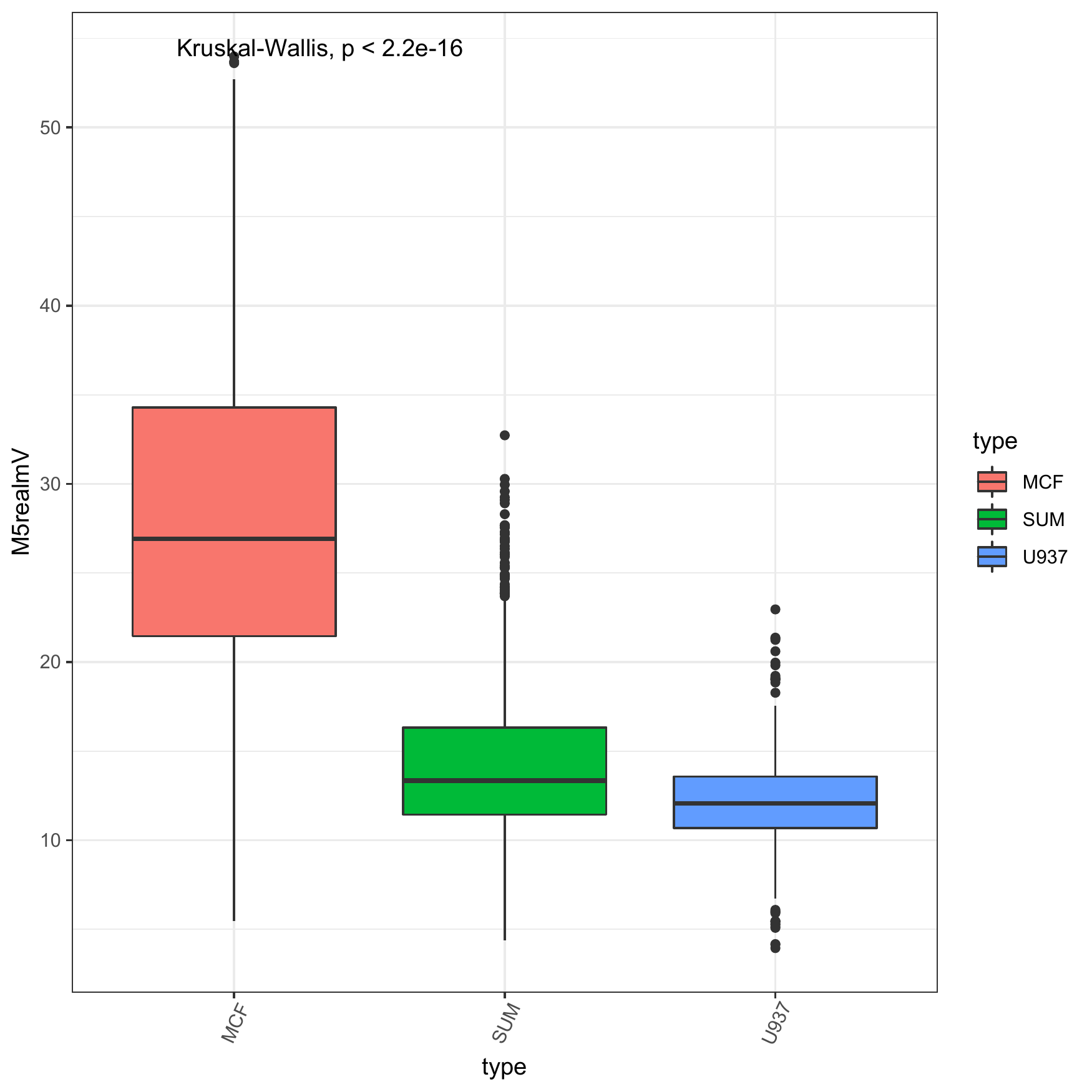}
\includegraphics[scale=0.45]{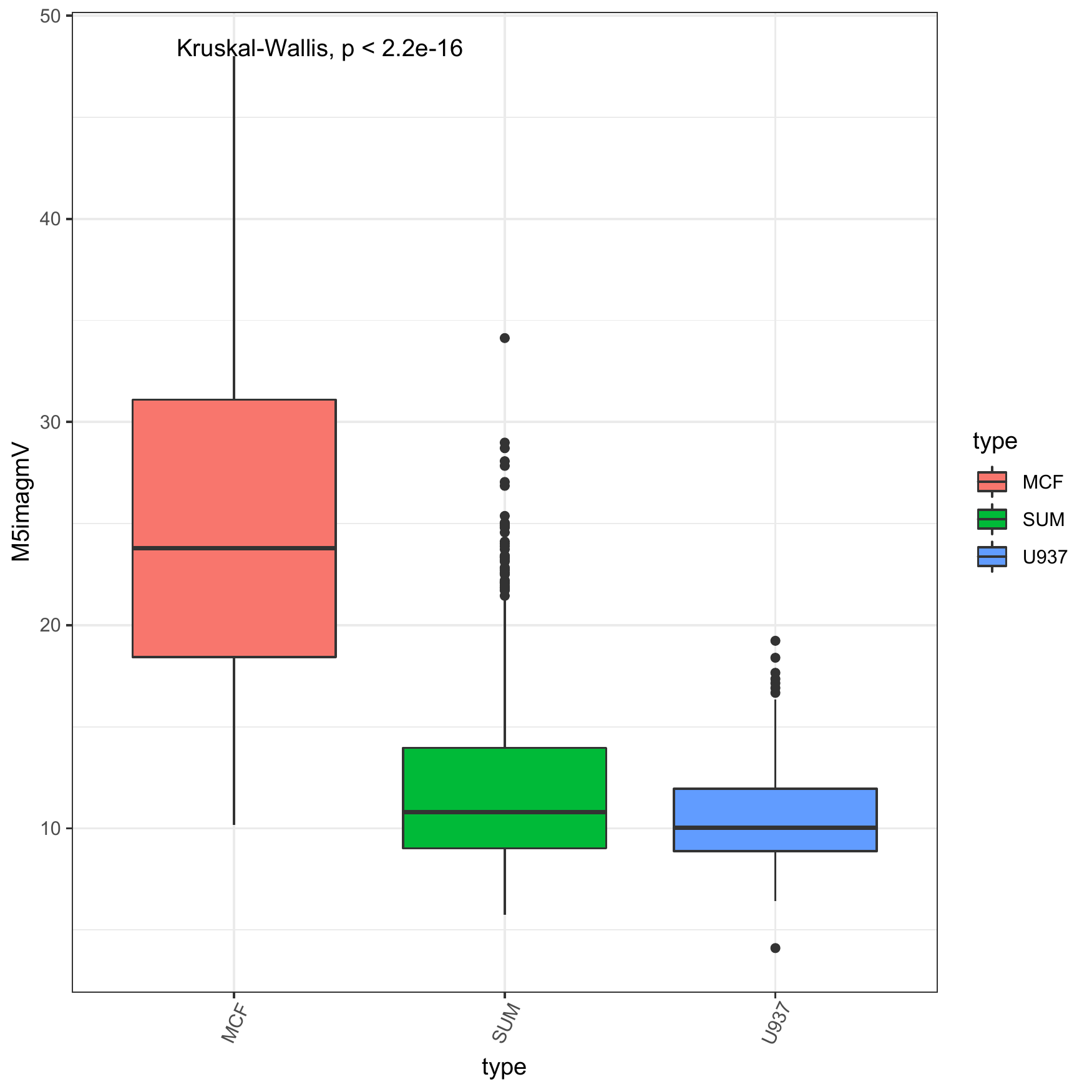}
\includegraphics[scale=0.45]{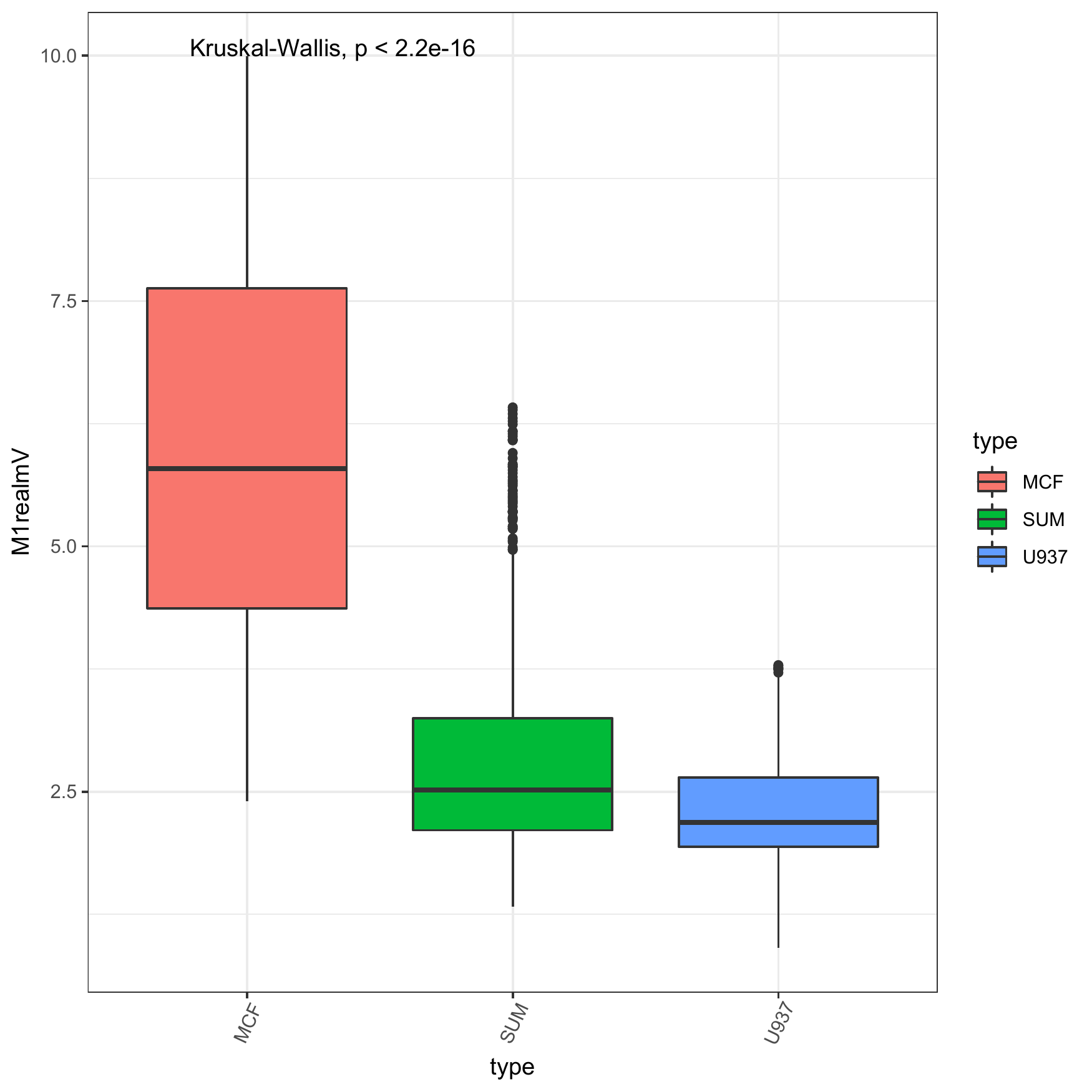}
\includegraphics[scale=0.45]{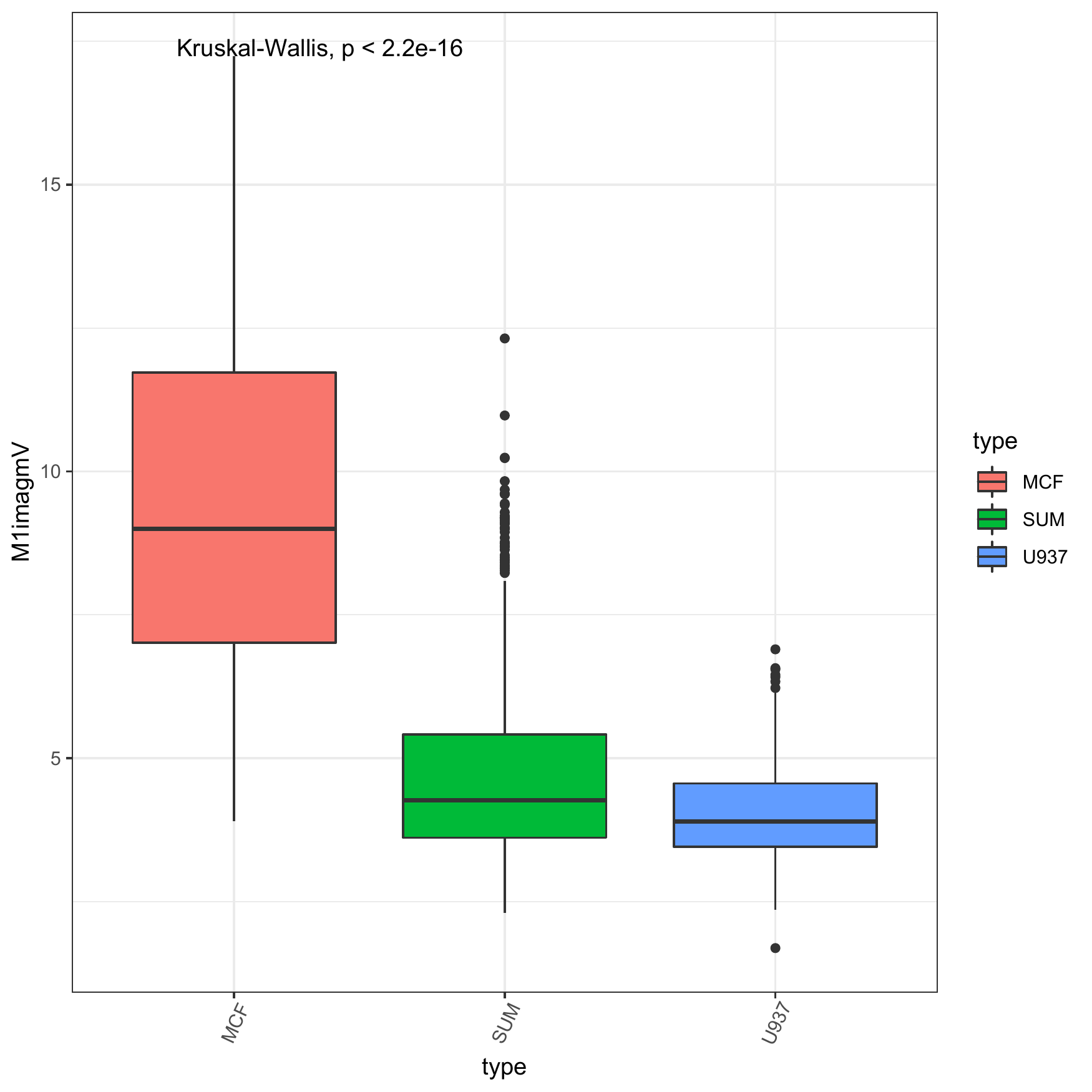}
\caption{Boxplot of the four electrical parameters; 5 Mega Hertz and 1 Mega Hertz, real and imaginary. Kruskal-Wallis test has been done to compare the parameters of the different cell types.}\label{fig1}
\end{figure}

\begin{center}
\begin{figure}
\includegraphics[scale=0.45]{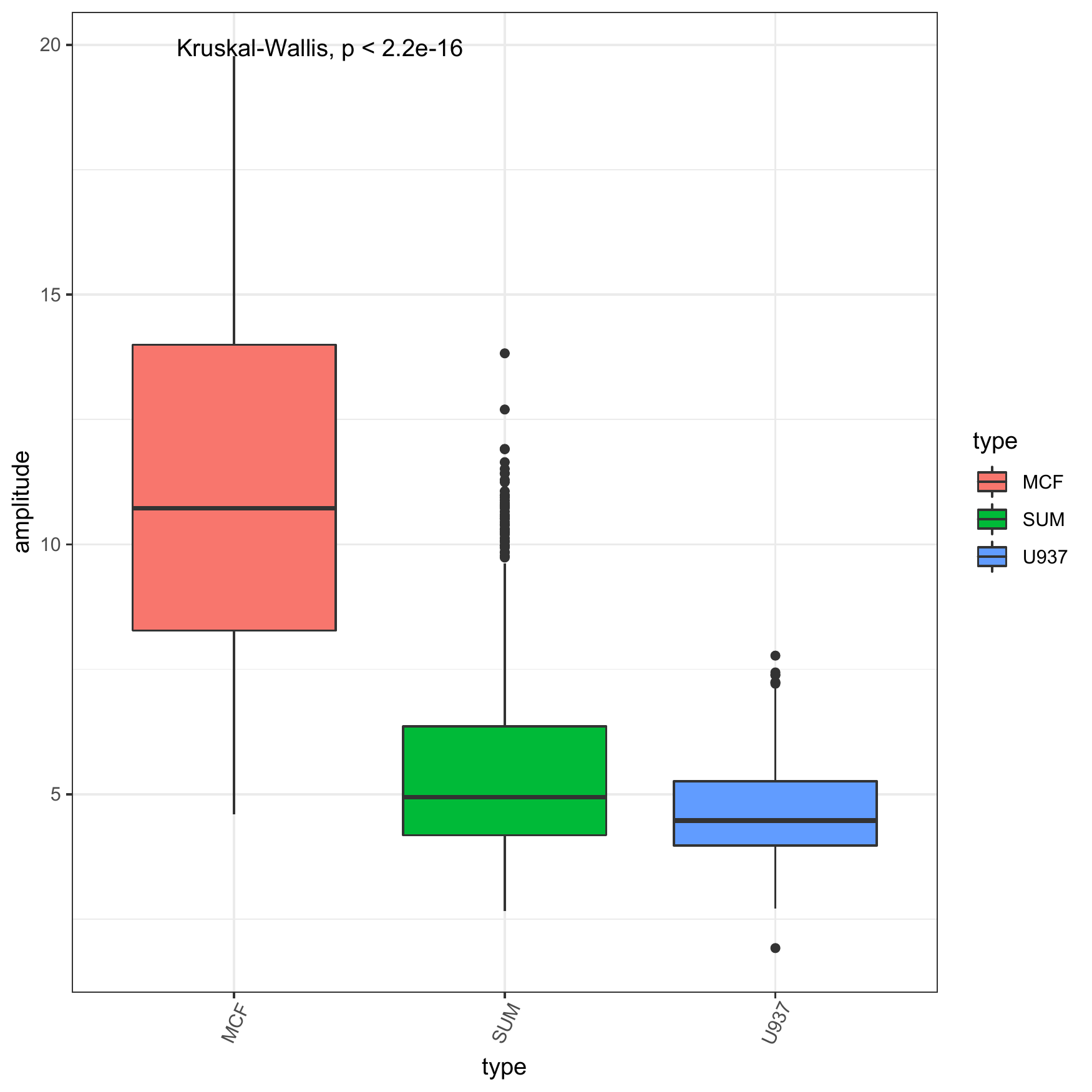}
\caption{Boxplot of the amplitude (size) of the different cell types with Kruskal-Wallis comparison test.}
\label{fig2}
\end{figure}
\end{center}

\section{Concluding remarks}

 We have proposed a least-squares multivariate nonparametric regression estimator by using the gPC (generalized Polynomial Chaos) principle. 
This estimator is given in terms of the tensor product of univariate Jacobi polynomials with parameters $\alpha=\beta \geq -\frac{1}{2}.$ In particular,
by using some spectral analysis results from the theory of positive definite random matrices, we have shown that this estimator is stable under the condition that the i.i.d. random training sampling points $\mathbf X_i$ follow a $d-$variate Beta distribution with parameters $(\alpha+1,\alpha+1)$ for each variable. Note that unlike many other least-squares nonparametric regression based estimators, the stability of our estimator does not require any extra regularization or conditioning step. \\
Also, we have performed an error analysis of our proposed estimator. More precisely, we have given an $L_2-$error as well as the $L_2-$risk error of this later. Moreover, we have studied its  convergence rate, when the regression function is assumed to belong to $d-$dimensional  isotropic Sobolev space $H^s(I^d),$ where $I=[0,1]$ and $s>0$ is the associated Sobolev smoothness exponent. In this case, we have shown that our estimator has the optimal min-max type convergence rate for $d-$dimensional regression problems and under the hypothesis that the regression function belongs to a functional space with some smoothness property.\\ 
Moreover, in the case of the frequently encountered case where the $n$ i.i.d. training sampling data points follow an unknown distribution, we have proposed to first apply a Shepard's type scattered interpolation technique to get fairly accurate approximations of the outputs at $n$ neighboring  random sampling sampling points following a $d-$variate Beta distribution. Numerical evidences indicate that even in the case of interpolated data, our proposed estimator still provides good results. We have performed numerical simulations on synthetic as well as real data. The results of these simulations indicate that the proposed estimator is competitive with some of  popular  multivariate regression  estimators from the literature, such as the smoothing kernel estimator, see for example \cite{Xiang}. Finally, we should mention that due to the its fairly heavy computational load, the estimator we have proposed in this work is rather adapted for small values of the dimension $d.$ Its  extension/adaptation to the case of  moderate or large values of $d,$ will be the subject of a future work.

\newpage

\end{document}